\numberwithin{equation}{section}
\numberwithin{figure}{section}
\newtheorem{claim}{\bf \t}[part]
\newtheorem{theorem}{Theorem}[section]
\newtheorem{lemma}{Lemma}[section]
\newtheorem{remark}{Remark}[section]
\newtheorem{definition}{Definition}[section]
\newtheorem{thm}{Theorem}[section]
\newtheorem{lem}[thm]{Lemma}
\newcommand{\qnt}[1]{\left(#1\right)}
\DeclareMathOperator*{\dist}{dist}
\newcommand{\dif}{\mathrm{d}}
\title{An Inverse Problem for Multi-Dimensional Piston Models with Large Velocity Variations}
\author{Dian Hu$^{1}$, Qianfeng Li$^{2,3}$, Yongqian Zhang$^{3}$}
\thanks{$^{1}$ School of Sciences, East China University of Science and Technology, Shanghai, 200237, P.R. China. Email: \texttt{hudianaug@qq.com}}
\thanks{$^{2}$ Department of Mathematics, Friedrich-Alexander-Universität Erlangen-Nürnberg, Cauerstr. 11, 91058 Erlangen, Germany. Email: \texttt{qianfeng.li@fau.de}}
\thanks{$^{3}$ School of Mathematical Sciences, Fudan University, Shanghai 200433, P.R. China. Email: \texttt{yongqianz@fudan.edu.cn}}
\thanks{Corresponding author: Qianfeng Li, \texttt{qianfeng.li@fau.de}}
\subjclass[2020]{35L50, 35L65, 35Q31, 35R30, 76N10}
\begin{document}
	\begin{abstract}
		\,\,When a circular symmetric piston suddenly expands into a still gas, a leading shock wave is generated. This paper investigates an inverse problem of reconstructing the trajectory of the piston  from the given leading shock front and the given initial flow conditions. We observe that in piston models, as the initial density goes to zero, the piston approaches the shock front; however, in the region between the piston and the shock front, the strict hyperbolicity of the system degenerates. By applying asymptotic analysis, we provide quantitative characterizations of the distance between the piston and the shock front, and the degeneration of strict hyperbolicity. Consequently, by designing appropriate a priori assumptions to balance the benefits and drawbacks arising as the initial density approaches zero, we employ the method of characteristics to 
    prove the global-in-time existence of the piecewise smooth solution for this inverse problem. In particular, the resulting flow structure exhibits significant velocity variations. 
	\end{abstract}
	
	\keywords{inverse problem, compressible Euler equations, large variation flow field, piston model, shock wave.}
	\maketitle
	\section{Introduction}
	
	Piston model is not only a basic prototype model in gas dynamic \cite{Courant1948}, but also an efficient approximation for hypersonic flow past slender bodies \cite{KuangJie2021,Tsien1946}. When a sphere expanding into a still gas with constant expanding speed, Taylor \cite{Taylor1946}, by numerical integration, firstly gives the self-similar flow configuration with a spherical leading shock, and Chen \cite{Chen2003JDE} carries out the analytical proof for such self-similar flow configuration. Besides there are many works devoted to the direct problem of determining flow fields and leading shock front with given initial flow fields and the given piston trajectory, for instance, see \cite{Wangzejun2004,Wang2004ACTA} for the local piecewise smooth solution around the singular point $r=0,$ see \cite{Wang2005DCDS,Wangzejun2004,Wangzejun2008Global} for the global admissible BV and $L^{\infty}$ weak solution, and see \cite{Ding2013ZAMP,Dingmin2013An,Lai2023EJAM,Lai2020EJAM} for the local and global piecewise smooth solution in relativistic cases. 
		
	In the paper, we are concerned with an inverse problem for a multi-dimensional piston moving into still gas, where we want to design piston trajectory such that the spherical shock front produced matches the given shock trajectory. The flow is governed by 
	\begin{equation}\label{E1}
	\left\{
	\begin{aligned}
	&\pi_t+\nabla\cdot(\pi \vec{V})=0,\\
	&(\pi\vec{V})_t+\nabla\cdot(\pi\vec{V}\otimes\vec{V}+pI)=0
	\end{aligned}
	\right.
	\end{equation} 
	where $\vec{V}\in \mathbb{R}^3$ and $\pi\in \mathbb{R}^+$ denote the velocity and the density respectively, and $\displaystyle p=A\pi^{\gamma}$ denotes the pressure with constants $A>0,\gamma\in(1,3).$ For simplification, we set $A=1$ in the sequel.  
	
	The initial data is given by 
	\begin{equation}\label{E2}
	(\pi,\vec{V})(\vec{x},0)=(\rho_{\infty},\vec{0}),~\vec{x}\in \mathbb{R}^3,
	\end{equation}
	with $\rho_{\infty}>0$ being constant. That is gas is static at the beginning.
	
 	The spherical shock trajectory is given by $$\mathsf{S}:=\{(\vec{x},t):\big|\vec{x}\big|=s(t), t>0\}$$ with $s(t)\in C^2(\mathbb{R}^{+}), s(0)=0.$ 
	
	As both initial data and the given shock trajectory are with spherical symmetric, we would find spherical solution for the dynamical process. Thus we assume that the piston has a trajectory $$\mathsf{P}:=\{(\vec{x},t):\big|\vec{x}\big|=b(t), t>0\}.$$
	
	Set $$\Omega_+:=\{(\vec{x},t):b(t)<\big|\vec{x}\big|<s(t),t>0\}$$ and $$\Omega_-:=\{(\vec{x},t):\big|\vec{x}\big|>s(t),t>0\}.$$ See Figure \ref{fig:sperical-piston-problem}.

	\begin{figure}
		\centering
		\includegraphics[width=0.5\linewidth]{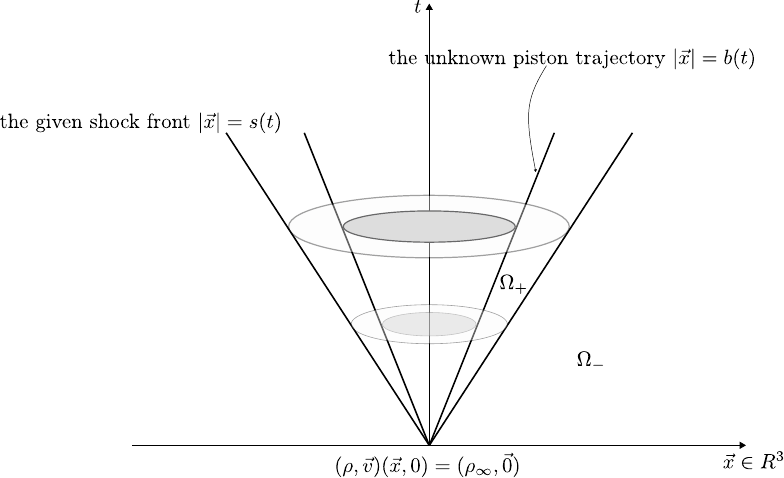}
		\caption{The schematic diagram for spherical piston inverse problem}
		\label{fig:sperical-piston-problem}
	\end{figure}
	
	Then we consider \eqref{E1} and \eqref{E2} in $\Omega_+\cup\Omega_-$ with the following conditions:
	\begin{equation}\label{E3}
	b^{\prime}(t)=\vec{V}\cdot\frac{\vec{x}}{\big|\vec{x}\big|}, \text{ on } P,
	\end{equation}
	and 
	\begin{equation}\label{E4}
	\left\{
	\begin{aligned}
	&[\pi]s^{\prime}(t)=[\pi \vec{V}]\cdot\frac{\vec{x}}{\big|\vec{x}\big|},\\
	&[\pi \vec{V}]s^{\prime}(t)=[\pi\vec{V}\otimes\vec{V}+pI]\cdot\frac{\vec{x}}{\big|\vec{x}\big|},\\
	&\pi_{+}>\pi_{-}, \text{ on } S.
	\end{aligned}
	\right.
	\end{equation}
	Here and in the sequel, we use the following notations for any $(\vec{x}_0,t_0)\in S,$
	\begin{equation*}
	\begin{aligned}
	&[A](\vec{x}_0,t_0):=A_{+}-A_{-},\\
	&A_{+}=\lim_{(\vec{x},t)\to(\vec{x}_0,t_0),(\vec{x},t)\in\Omega_+}	A(\vec{x},t),\\
	&A_{-}=\lim_{(\vec{x},t)\to(\vec{x}_0,t_0),(\vec{x},t)\in\Omega_-}	A(\vec{x},t),
	\end{aligned}
	\end{equation*}
	and $A$ can be one of the quantities $\pi, \pi\vec{V}, \pi\vec{V}\otimes\vec{V}+pI$ in \eqref{E4},  \eqref{E3} is the non-slip condition and the first two equations in \eqref{E4} are called the Rankine-Hugoniot condition and the last one in \eqref{E4} is the entropy condition.
	
	Our problem is to determine $b(t)\in C^2(\mathbb{R}^+)$ and $(\pi,\vec{V})\in C^1(\Omega_+\cup\Omega_-)$ for prescribed $\rho_{\infty}$ and prescribed $s(t)\in C^2(\mathbb{R}^+)$. Due to $\displaystyle(\pi,\vec{V})\big|_{\Omega_-}=(\rho_\infty,\vec{0})$ and the  spherical symmetry setting, we rewrite the problem \eqref{E1}-\eqref{E4} as follows. More precisely, let 
    \begin{equation}\label{coodinatestransform1}
    r=\big|\vec{x}\big|, \pi(\vec{x},t)={\rho}(r,t), \vec{V}(\vec{x},t)={v}(r,t)\frac{\vec{x}}{\big|\vec{x}\big|},    
    \end{equation}
    and denote as
    $$\Omega=\mathcal{M}(\Omega_+), \mathsf{P}=\mathcal{M}(P), \mathsf{S}=\mathcal{M}(S),$$
    where $\mathcal{M}$ is the map $\displaystyle (\vec{x},t)\mapsto (\big|\vec{x}\big|,t),$  
    then problem \eqref{E1}-\eqref{E4} is equivalent to  
	\begin{equation}\label{EE1}
	\left\{
	\begin{aligned}
	&\rho_t+(\rho v)_r+\frac{2\rho v}{r}=0,\\
	&(\rho v)_t+(\rho v^2+\rho^\gamma)_r+\frac{2\rho v^2}{r}=0, \text{ in } \Omega,
	\end{aligned}
	\right.
	\end{equation}
	with 
	\begin{equation}\label{EE3}
	b^{\prime}(t)=v(b(t),t), \text{ on } \mathsf{P},
	\end{equation}
	and 
       \begin{equation}\label{EE4}
	\left\{
	\begin{aligned}
	&(\rho-\rho_{\infty})s^{\prime}(t)=\rho v,\\
	&\rho vs^{\prime}(t)=(\rho v^2+\rho^\gamma-\rho_\infty^\gamma),\\
	&\rho>\rho_{\infty}, \text{ on } \mathsf{S}.
	\end{aligned}
	\right.
	\end{equation} 
	Here and in the sequel, $(\rho,v)\big|_\mathsf{S}$ means the trace of $(\rho,v)\big|_{\Omega}$ on the shock $\mathsf{S}.$  Therefore, we focus on problem \eqref{EE1}-\eqref{EE4} in the sequel. 
	
	For the one-dimensional inverse piston problem, when the prescribed initial data is close to a constant state and the prescribed shock trajectory is near a straight line, Li and Wang \cite{Wanglibin2007}, Wang \cite{WangLibin2014}, Wang and Wang  \cite{Wanglibin2019}  apply characteristic method to establish the global smooth piston trajectory and the global piecewise smooth flow field. 
    
    Some other well-developed inverse problems in hyperbolic conservation law include the initial data identification in Burgers equation \cite{allahverdi2016numerical,castro2008alternating,castro2010optimal,colombo2020initial,esteve2020inverse,gosse2017filtered,liard2021initial,liard2023analysis},
    and reconstruction of the shape of the obstacles in the context of supersonic flow past obstacles with an attached leading supersonic shock. The latter can be done either from the prescribed location of the leading shock and the prescribed incoming flow \cite{hu2024inverse,li2022inverse,li2006global,wang2011direct} or from the prescribed pressure on the obstacle's surface and the prescribed incoming flow \cite{goldsworthy1952supersonic,pu2023inverse}. Considering the initial data identification in Burgers equation, the sufficient and necessary condition for attainable final states is established in \cite{colombo2020initial,gosse2017filtered}, and the collection of the initial states corresponding to an attainable final state is fully characterized in \cite{esteve2020inverse,liard2021initial}. Moreover, when formulated as non-smooth optimization problems, initial data identification has been widely studied using numerical methods, as shown in \cite{allahverdi2016numerical,castro2008alternating, castro2010optimal, liard2023analysis} and the references therein.
    

    In the article, to establish the global solvability of problem \eqref{EE1}-\eqref{EE4}, we analyze the Riemann invariants and their derivatives along the forward and backward characteristics emitting from the shock front $\mathsf{S}$ and getting into $\Omega.$ There are some important observations in our setting:

     \textbf{(1)} The given shock's speed could vary in a wide range, which may result in significant variations in the flow fields in $\Omega$. Consequently, the behavior of characteristics is complicated in $\Omega,$ and the perturbation analysis presented in \cite{Wanglibin2007,WangLibin2014,Wanglibin2019} fails here. Fortunately, according to some finer analysis, we figure out that before arriving at the piston, the characteristics emitting at the point $(s(t_0),t_0)\in \mathsf{S} $ locate in such a narrow temporal strip $\displaystyle\Omega\cap \{(1-l(\rho_\infty))t_0<t<(1+l(\rho_\infty))t_0\}$, in which the flow states are almost constant, and the trajectory of the shock and the piston and the characteristics are almost straight. Here $\displaystyle l(\rho_\infty)>0, \lim_{\rho_\infty\to 0+}l(\rho_\infty)=0$ and the limit process is uniform with respect to $(r,t)\in \mathbb{R}^+\times \mathbb{R}^+.$   

     \textbf{(2)} Since the characteristics are confined to such a narrow temporal strip, when integrating \eqref{E10} and \eqref{E11} to do $C^1$ estimates, the effect of the geometric source term  $\displaystyle 2\rho v/r, 2\rho v^2/r$ in \eqref{EE1} is bounded by $$\displaystyle \int_{(1-l(\rho_{\infty}))r_0}^{(1+l(\rho_{\infty}))r_0}\frac{1}{r} ~\dif r,$$ which is sufficiently close to $0$ when $\rho_\infty$ sufficiently close to $0.$ Moreover, these source terms are singular at $t=0.$ To avoid the difficulty, we assume the shock expands with constant speed near $t=0$, and consider the self-similar solution. By some delicate analysis, we figure out that the self-similar solution satisfies the required a priori assumption \eqref{EE47}. 

     \textbf{(3)} There is a loss of strict hyperbolicity of \eqref{EE1} in $\Omega$ as $\rho_{\infty}$ tends to $0.$ Thus, we need to balance the required smallness of $\rho_\infty$ and the loss of strict hyperbolicity of the system. To achieve this,  we apply a suitable asymptotic expansion of the solution near $\rho_{\infty}=0$ to conduct some fine estimates. Moreover, we remark that the a priori assumption need to be designed carefully.

    Now, we state the main results as follows.
    
	\begin{theorem}\label{thm1}
		For any given positive constants $\kappa_1,\kappa_2,\kappa_3,\kappa_{4}, \varpi_0$ there exists $\epsilon>0$ such that if the given initial data $(\rho_{\infty},0)$ and the given shock trajectory $\displaystyle \mathsf{S} =\{(r,t):r=s(t), t>0\}$ jointly satisfy $\displaystyle 0<\rho_{\infty}<\epsilon$ and 
        \begin{equation*}
               \text{(A1) }\kappa_1<s^{\prime}(t)<\kappa_2;\quad \text{(A2) }  s''(t)=0, t\in(0,\kappa_{3});\quad \text{(A3) } \sup_{t\in \mathbb{R}^{+}} \big|ts''(t)\big|<\kappa_4\rho^{\varpi_0}_{\infty};
        \end{equation*}
		then problem \eqref{EE1}-\eqref{EE4} globally admits $b(t)\in C^2(\mathbb{R}^{+})$ and $(\rho,v)\in C^1(\Omega).$
	\end{theorem}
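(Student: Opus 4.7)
The plan is to reduce problem \eqref{EE1}--\eqref{EE4} to a characteristic system for the Riemann invariants $R_{\pm}=v\pm\frac{2}{\gamma-1}\sqrt{\gamma}\,\rho^{(\gamma-1)/2}$, which along the characteristic directions $\frac{\dif r}{\dif t}=v\pm c$ (with $c=\sqrt{\gamma}\,\rho^{(\gamma-1)/2}$) obey ODEs whose only forcing is the geometric source of order $v/r$. Solving the Rankine--Hugoniot system in \eqref{EE4} expresses $(\rho,v)|_{\mathsf{S}}$ as smooth functions of $s'(t)$ and $\rho_{\infty}$; in particular, as $\rho_{\infty}\to 0$ one has $v|_{\mathsf{S}}\to s'(t)$ while $c|_{\mathsf{S}}$ vanishes like a positive power of $\rho_{\infty}$, so both characteristic speeds on $\mathsf{S}$ collapse onto $s'(t)$. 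This yields complete Cauchy data on $\mathsf{S}$ for the Riemann invariants; $b(t)$ is then recovered \emph{a posteriori} by solving the ODE \eqref{EE3} once $v$ is known on the locus $r=b(t)$.

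I would proceed in two stages. For $t\in(0,\kappa_3)$, assumption (A2) forces $\mathsf{S}$ to be a straight ray, so I look for the genuinely self-similar piecewise smooth solution in this cone; this desingularises the geometric source at $t=0$ and produces a piston arc $b|_{(0,\kappa_3)}\in C^2$ together with Cauchy data on $t=\kappa_3$ that already satisfy the forthcoming a priori bounds. For $t\geq\kappa_3$ I set up a bootstrap whose a priori assumptions encode: (i) a narrow-strip geometry $s(t)-b(t)=\mathcal{O}(\rho_{\infty}^{\alpha})$ for a suitable $\alpha>0$; (ii) the confinement observation that every characteristic issuing from $(s(t_0),t_0)\in\mathsf{S}$ is trapped in $\{(1-l(\rho_\infty))t_0<t<(1+l(\rho_\infty))t_0\}$ until it hits $\mathsf{P}$; and (iii) $C^{1}$ bounds on $R_{\pm}$ calibrated by explicit powers of $\rho_{\infty}^{\varpi_0}$ that are compatible with (A3). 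Integrating the characteristic ODEs from $\mathsf{S}$ to $\mathsf{P}$ and back, the source contribution is bounded by $\int_{(1-l(\rho_\infty))r_0}^{(1+l(\rho_\infty))r_0}\frac{\dif r}{r}=\mathcal{O}(l(\rho_\infty))$, which is as small as desired; differentiating once more propagates the $C^{1}$ bound by Gronwall. Closing the bootstrap delivers $(\rho,v)\in C^{1}(\Omega)$, and $b\in C^{2}(\mathbb{R}^{+})$ follows from \eqref{EE3} together with (A3).

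The hard part is the calibration in (iii). Because \eqref{EE1} loses strict hyperbolicity at rate $c|_{\mathsf{S}}\to 0$, each reflection of a characteristic between $\mathsf{S}$ and $\mathsf{P}$ amplifies derivative errors by a factor that blows up as $\rho_\infty\to 0$, whereas the source smallness improves only like $l(\rho_\infty)$; a naive Gronwall argument in the original variables cannot close. The remedy, anticipated as observation (3) in the introduction, is to carry out the analysis on an asymptotic expansion of $(\rho,v)$ in powers of $\rho_\infty$: the leading term is explicit (essentially the strong-shock cold-gas configuration determined algebraically by $s'(t)$), and the bootstrap is run on a corrector measured in a norm rescaled by the appropriate power of $\rho_\infty$. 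The exponent $\varpi_0$ in (A3) is chosen exactly to dominate this amplification factor, so that the linearised fixed-point argument contracts uniformly in $\rho_\infty<\epsilon$.
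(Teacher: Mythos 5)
Your proposal follows essentially the same route as the paper: Riemann invariants and characteristic ODEs, strong-shock asymptotics of the Rankine--Hugoniot data as $\rho_\infty\to 0$, the self-similar solution on the initial cone where (A2) holds, confinement of characteristics to a narrow temporal strip so that the geometric source contributes only $\mathcal{O}(l(\rho_\infty))$, and a continuity/bootstrap argument with $C^1$ a priori bounds calibrated by powers of $\rho_\infty$ balanced against the degeneration of strict hyperbolicity via $\varpi_0$. The only cosmetic difference is that the paper closes the derivative estimates by a single integration along each characteristic arc from the shock (exploiting that the arc length $t_+-t\leq\mathcal{R}_2 t$ with $\mathcal{R}_2=\mathcal{O}_+(\rho_\infty^{(\gamma-1)/(2\gamma)})$ dominates the amplification of the shock data), rather than by the reflection/fixed-point bookkeeping you sketch, but this is the same mechanism.
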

	
The remaining part is organized as follow. In Section 2, we consider the given shock moving with constant speed where the flow field is with self-similar structure. By analyzing the Rankine-Hugoniot condition and the monotonic properties of such self-similar flow, we establish the asymptotic expansion with respect to $\displaystyle\rho_\infty$ for the flow field and its derivative. In Section 3, we derive asymptotic expansion results for the solution near the shock in the case with variable shock speed. Additionally, we analyze how the flow states $\rho,v$,  as well as $\lambda_{\pm}-s'(t),$ the differences between the characteristic values and the shock speed, depend on the variation of the Riemann invariants $w_{\pm}.$ Furthermore, we provide estimates for the derivatives of the solution on the given shock. 
In Section 4, we first introduce the Riemann invariants $w_\pm$ to rewrite system \eqref{EE1} into a diagonal form. Next, we demonstrate that any characteristic curve, before reaching the shock or the piston, remains confined to a narrow temporal strip. Utilizing these narrow estimates, we then analyze the Riemann invariants along the characteristic curves to establish the $C^1$ a priori estimates. Finally, we use these results to prove Theorem \ref{thm1}.

\section{Shock wave moving with constant speed}
When the shock wave expands into still gas with constant speed, the flow field is with self-similar configuration, and its existence has been established in \cite{Chen2003JDE,Wangzejun2008Global}. In the section, we will give fine estimates on the variation of such self-similar configuration. 

Under the self-similar configuration assumption, setting
\begin{equation}\label{coodinatestransform2}
    \sigma=\frac{r}{t}, ~\rho(r,t)=\varrho(\sigma), ~u(r,t)=\vartheta(\sigma), 
\end{equation}
and denoting as
\begin{equation}
    \varrho_\sigma=\frac{\dif \varrho}{\dif \sigma}, ~\vartheta_\sigma=\frac{\dif \vartheta}{\dif \sigma},
\end{equation}
we rewrite problem \eqref{EE1}-\eqref{EE4} into 
\begin{equation}\label{eq:Constantmovingspeedcase}
    \left\{
    \begin{aligned}
        &\sigma(\vartheta-\sigma)\varrho_{\sigma}+\sigma\varrho\vartheta_\sigma+2\varrho\vartheta=0,\\
        &(\vartheta-\sigma)\vartheta_\sigma+\gamma\varrho^{\gamma-2}\varrho_{\sigma}=0, \text{ for }\sigma\in(b_0,s_0),\\
        & \vartheta(\sigma)=b_0, \text{ for } \sigma=b_0,\\
        &(\varrho-\rho_{\infty})s_0=\varrho\vartheta,\\
        &\varrho\vartheta s_0 =\varrho\vartheta^2+\varrho^{\gamma}-\varrho^{\gamma}_{\infty},\\
        &\varrho>\rho_{\infty}, \text{ for } \sigma=s_0.
    \end{aligned}
    \right.
\end{equation}
Here, $s_0\in(\kappa_1,\kappa_2)$ is the given shock speed, $b_0, (\varrho,\vartheta)\big|_{(b_0.s_0)}$ denote the corresponding piston speed and the flow field to be determined. 

We first verify Rankine-Hugoniot condition and entropy condition admit unique $(\varrho(s_0),\vartheta(s_0))$ for $\rho_\infty$ close to zero, and then figure out the following two inequalities on  $\varrho(s_0),\vartheta(s_0)$.

\begin{lem}\label{lem:RHC}
For $\rho_\infty$ close enough to zero, for any given $s_0\in(\kappa_1,\kappa_2),$ Rankine-Hugoniot condition and entropy condition, i.e. the last three formulas in \eqref{eq:Constantmovingspeedcase}, admit unique $(\varrho(s_0),\vartheta(s_0)).$ Furthermore, there hold that
    \begin{equation}\label{eq:RHC}
    (\vartheta-\sigma)^2-\gamma\varrho^{\gamma-1}\big|_{\sigma=s_0}<0,
    \end{equation}
    and 
    \begin{equation}\label{eq:susgn0}
        \vartheta-\sigma\big|_{\sigma=s_0}<0.
    \end{equation}
    Here $\kappa_1,\kappa_2$ are given in Theorem \ref{thm1}.
\end{lem}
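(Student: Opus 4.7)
The plan is to eliminate $\vartheta$ from the three shock conditions and reduce everything to a single scalar equation in the post-shock density $\varrho = \varrho(s_0)$. Solving the first Rankine--Hugoniot equation of \eqref{eq:Constantmovingspeedcase} for $\vartheta$ gives $\vartheta = s_0(1 - \rho_\infty/\varrho)$, so $\vartheta - s_0 = -s_0\rho_\infty/\varrho < 0$, which is \eqref{eq:susgn0}. Substituting this expression into the second Rankine--Hugoniot equation and clearing a common factor of $\varrho$ yields
\begin{equation*}
f(\varrho) := \varrho^{\gamma+1} - (s_0^2\rho_\infty + \rho_\infty^\gamma)\varrho + s_0^2\rho_\infty^2 = 0.
\end{equation*}
The function $f$ is strictly convex on $(0,\infty)$, since $f''(\varrho) = \gamma(\gamma+1)\varrho^{\gamma-1} > 0$, and it carries the trivial root $\varrho = \rho_\infty$; moreover $f'(\rho_\infty) = \rho_\infty(\gamma\rho_\infty^{\gamma-1} - s_0^2) < 0$ as soon as $\rho_\infty < (\kappa_1^2/\gamma)^{1/(\gamma-1)}$. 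Convexity then forces exactly one additional zero $\bar\varrho > \rho_\infty$, yielding existence and uniqueness of the admissible pair $(\varrho(s_0), \vartheta(s_0))$.

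To establish \eqref{eq:RHC}, using $\vartheta - s_0 = -s_0\rho_\infty/\varrho$ rewrites the target inequality as $s_0^2\rho_\infty^2 < \gamma\varrho^{\gamma+1}$. From $f(\varrho) = 0$ one has $\varrho^{\gamma+1} = s_0^2\rho_\infty(\varrho - \rho_\infty) + \rho_\infty^\gamma\varrho > s_0^2\rho_\infty(\varrho - \rho_\infty)$, so it suffices to show $\varrho > (1 + 1/\gamma)\rho_\infty$ for sufficiently small $\rho_\infty$. Setting $a = 1 + 1/\gamma > 1$, a direct computation gives
\begin{equation*}
f(a\rho_\infty) = (a^{\gamma+1} - a)\rho_\infty^{\gamma+1} - (a - 1)s_0^2\rho_\infty^2.
\end{equation*}
Because $\gamma + 1 > 2$ and $(a-1)s_0^2 \geq \kappa_1^2/\gamma > 0$, the $\rho_\infty^2$ term dominates uniformly in $s_0 \in (\kappa_1, \kappa_2)$ as $\rho_\infty \to 0^+$, so $f(a\rho_\infty) < 0$ for every sufficiently small $\rho_\infty$. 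Since $f$ is strictly convex with zeros only at $\rho_\infty$ and $\bar\varrho$, it is negative precisely on $(\rho_\infty, \bar\varrho)$; hence $f(a\rho_\infty) < 0$ forces $a\rho_\infty < \bar\varrho = \varrho$, giving the desired bound. Combining everything yields $\gamma\varrho^{\gamma+1} > \gamma s_0^2\rho_\infty(\varrho - \rho_\infty) > s_0^2\rho_\infty^2$, proving \eqref{eq:RHC}.

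The main technical point is to ensure that the threshold on $\rho_\infty$ may be chosen independently of $s_0 \in (\kappa_1, \kappa_2)$; this is built into the bounds involving $\kappa_1$ above, which come essentially for free once one tracks the $s_0$-dependence of $f'(\rho_\infty)$ and $f(a\rho_\infty)$ explicitly. No deeper analytic input is required beyond strict convexity of $f$ and the elementary comparison $\rho_\infty^{\gamma+1} = o(\rho_\infty^2)$ valid for $\gamma > 1$.
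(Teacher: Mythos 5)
Your proof is correct, and it takes a genuinely different route from the paper's. After the common first step (solving the first Rankine--Hugoniot relation for $\vartheta$, which gives \eqref{eq:susgn0} immediately), the paper normalizes by the upstream density, setting $k=\varrho(s_0)/\rho_\infty$ and reducing \eqref{eq:E2} to $s_0^2\rho_\infty^{1-\gamma}=k(k^\gamma-1)/(k-1)$ as in \eqref{eq:RTT2}; uniqueness then follows from strict monotonicity of the right-hand side on $k>1$, and \eqref{eq:RHC} from the exact identity \eqref{eq:E3} together with the sign of $f_1(k)=-\gamma k^{\gamma+1}+(\gamma+1)k^\gamma-1$. You instead clear denominators to get the strictly convex function $f(\varrho)=\varrho^{\gamma+1}-(s_0^2\rho_\infty+\rho_\infty^\gamma)\varrho+s_0^2\rho_\infty^2$ with the trivial root $\rho_\infty$ and $f'(\rho_\infty)<0$, which pins down the unique admissible root $\bar\varrho>\rho_\infty$; your derivation of \eqref{eq:RHC} then rests on the quantitative lower bound $\varrho>(1+1/\gamma)\rho_\infty$, obtained by checking $f((1+1/\gamma)\rho_\infty)<0$. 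Both arguments are elementary and uniform in $s_0\in(\kappa_1,\kappa_2)$. Two differences are worth noting. First, the paper's sign analysis of $f_1$ shows that \eqref{eq:RHC} holds for \emph{every} entropy-admissible shock ($k>1$), with no smallness of $\rho_\infty$; your argument only yields it for $\rho_\infty$ small (via $\rho_\infty^{\gamma+1}=o(\rho_\infty^2)$), which is all the lemma asks for but is strictly weaker. Second, the paper's substitution $k=\varrho/\rho_\infty$ is not incidental: it is reused throughout Section 2 to produce the asymptotics $k=\mathcal{O}_+(\rho_\infty^{(1-\gamma)/\gamma})$ and $\varrho(s_0)=\mathcal{O}_+(\rho_\infty^{1/\gamma})$, so the paper's route doubles as setup for Theorem \ref{thm:VarianceofSelfsimilarSolution}, whereas yours is self-contained. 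One small point you leave implicit: the existence of the second root $\bar\varrho$ needs $f(\varrho)\to+\infty$ as $\varrho\to\infty$ in addition to convexity and $f'(\rho_\infty)<0$; this is immediate from the leading term $\varrho^{\gamma+1}$, but should be stated.
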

\begin{proof}
    It directly follows from Rankine-Hugoniot condition and entropy condition that for $\sigma=s_0,$
    \begin{equation}\label{eq:E1}
     	\left\{
     	\begin{aligned}
     	&\vartheta=\sqrt{\frac{(\varrho-\rho_{\infty})(\varrho^{\gamma}-\rho^{\gamma}_{\infty})}{\rho_{\infty}\varrho}},\\
     	&\vartheta=\frac{(\varrho-\rho_{\infty})}{\varrho}s_0,\\
        &\varrho>\rho_\infty.
     	\end{aligned}
     	\right.
     	\end{equation}
      Then eliminating $\vartheta$ in above leads to for $\sigma=s_0,$
      \begin{equation}\label{eq:E2}
          s_0^2=\frac{\varrho(\varrho^{\gamma}-\rho^{\gamma}_{\infty})}{(\varrho-\rho_{\infty})\rho_{\infty}}, ~~\rho>\rho_\infty
          .
      \end{equation}
      Moreover, by setting $\displaystyle k=\frac{\varrho(s_0)}{\rho_{\infty}},$ we rewrite \eqref{eq:E2} as
      \begin{equation}\label{eq:RTT2}
          s_0^2\rho_\infty^{1-\gamma}=\frac{k(k^\gamma-1)}{k-1}:=f(k),~~ k>1.
      \end{equation}
     
     Note the facts that $ f'(k)>0 \text{ for }k>1,$ and 
      \begin{equation}
     \lim_{k\to1}f(k)=\gamma, ~\lim_{k\to+\infty}f(k)=+\infty.
      \end{equation}
    Thus, we conclude from \eqref{eq:RTT2} that when $\displaystyle\rho_\infty\in(0,({\kappa_1}/{\gamma})^{\frac{1}{\gamma
       -1}}),$ the Rankine-Hugoniot condition and entropy condition admit unique $\varrho(s_0).$ Then inserting the obtained $\varrho(s_0)$ into the second formula in \eqref{eq:E1}, we obtain the unique $\vartheta(s_0).$

      We next prove the inequality \eqref{eq:RHC}. Combining \eqref{eq:E1} and \eqref{eq:E2},  direct computations show that for $\sigma=s_0,$
      \begin{equation}\label{eq:E3}
          \begin{aligned}
              (\vartheta-s_0)^2-\gamma\varrho^{\gamma-1}&=\frac{\rho^2_{\infty}}{\varrho^2}s_0^2-\gamma\varrho^{\gamma-1}= \frac{\rho^{\gamma-1}_{\infty}}{k(k-1)}f_1(k),
          \end{aligned}
      \end{equation}
   where $$f_1(k)=-\gamma k^{\gamma+1}+(\gamma+1)k^{\gamma}-1.$$ 
   
   For the function $f_1(k),$ direct computations show that $f_1(1)=0$ and for $k>1,$ 
   \begin{equation}
       \begin{aligned}
            f'_1(k)=(1-k)\gamma(\gamma+1)k^{\gamma-1}<0,
       \end{aligned}
   \end{equation}
which implies that  $\displaystyle f_1(k)<0 \text{ for } k>1.$ Therefore, inserting the entropy condition, i.e., $k>1$ and the fact that $\displaystyle f_1(k)<0 \text{ for } k>1$ into \eqref{eq:E3}, we obtain the second formula in \eqref{eq:RHC}. 

Finally, as for the inequality \eqref{eq:susgn0}, it follows form the second equality in \eqref{eq:E1} directly. The proof is complete. 
\end{proof}

The existence of the self-similar solution of problem \eqref{eq:Constantmovingspeedcase} has been established in \cite[Lemma 3.5]{Wang2005DCDS}. We study the monotonic relations of such self-similar flow field as follows. 

\begin{lem}\label{lem:Monotonic}
    For any given $s_0\in(\kappa_1,\kappa_2)$ with $\kappa_1,\kappa_2$ given in Theorem \ref{thm1}, let $(\varrho(\sigma),\vartheta(\sigma)), \sigma\in(s_0,b_0)$ be the $C^1$ solution of problem \eqref{eq:Constantmovingspeedcase}. Then there hold that for $\sigma\in(b_0,s_0),$
    \begin{equation}\label{eq:MonotonicRelationsforconstantcase}
        \varrho_\sigma<0, ~\vartheta_\sigma<0.
    \end{equation}
\end{lem}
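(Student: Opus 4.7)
The plan is to make the two ODEs in \eqref{eq:Constantmovingspeedcase} explicit for the pair $(\varrho_\sigma,\vartheta_\sigma)$ and then propagate the desired signs from the shock $\sigma=s_0$ toward the piston by continuity. Treating those two equations as a linear system with determinant $D(\sigma)=\sigma\big[(\vartheta-\sigma)^2-\gamma\varrho^{\gamma-1}\big]$, Cramer's rule yields
\begin{equation*}
\varrho_\sigma=\frac{-2\varrho\vartheta(\vartheta-\sigma)}{\sigma\bigl[(\vartheta-\sigma)^2-\gamma\varrho^{\gamma-1}\bigr]},\qquad
\vartheta_\sigma=\frac{2\gamma\varrho^{\gamma-1}\vartheta}{\sigma\bigl[(\vartheta-\sigma)^2-\gamma\varrho^{\gamma-1}\bigr]}.
\end{equation*}
From these formulas, both $\varrho_\sigma<0$ and $\vartheta_\sigma<0$ follow immediately from the three sign conditions \textbf{(i)} $(\vartheta-\sigma)^2-\gamma\varrho^{\gamma-1}<0$, \textbf{(ii)} $\vartheta-\sigma<0$, and \textbf{(iii)} $\vartheta>0$. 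At $\sigma=s_0$, conditions (i) and (ii) are supplied by Lemma \ref{lem:RHC}, while (iii) follows from the second line of \eqref{eq:E1} together with the entropy condition $\varrho(s_0)>\rho_\infty$ and $s_0>\kappa_1>0$.

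I would then extend this sign information to all of $(b_0,s_0)$ by a bootstrap/continuity argument. Let $\sigma^*$ be the infimum of those $\sigma'\in[b_0,s_0)$ for which (i)--(iii) hold throughout $[\sigma',s_0]$; the base case gives $\sigma^*<s_0$, and the target is $\sigma^*=b_0$. Assume for contradiction $\sigma^*>b_0$. On $(\sigma^*,s_0]$ the Cramer formulas already force $\varrho_\sigma<0$ and $\vartheta_\sigma<0$, so $\varrho$ and $\vartheta$ are strictly decreasing in $\sigma$ and, since $(\vartheta-\sigma)_\sigma=\vartheta_\sigma-1<0$, so is $\vartheta-\sigma$. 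Differentiating $(\vartheta-\sigma)^2-\gamma\varrho^{\gamma-1}$ and substituting these signs shows this bracket is strictly increasing in $\sigma$, hence on $(\sigma^*,s_0]$ it is strictly less than its (already negative) value at $s_0$; by continuity the same strict inequality survives at $\sigma^*$, keeping (i) strict. Likewise $\vartheta(\sigma)\ge \vartheta(s_0)>0$ on the interval, so (iii) is strict at $\sigma^*$. Consequently, the only condition that can become equality at $\sigma^*$ is (ii), i.e.\ $\vartheta(\sigma^*)=\sigma^*$; but this is precisely the piston boundary condition in \eqref{eq:Constantmovingspeedcase}, forcing $\sigma^*=b_0$ and contradicting $\sigma^*>b_0$. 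Hence $\sigma^*=b_0$ and \eqref{eq:MonotonicRelationsforconstantcase} holds on $(b_0,s_0)$.

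The main obstacle is the coupled nature of (i)--(iii): none of them can be controlled without temporarily borrowing from the other two and the resulting monotonicity of $(\varrho,\vartheta)$. Lemma \ref{lem:RHC} supplies the strict starting inequalities, and the monotonicities extracted from $\varrho_\sigma<0,\ \vartheta_\sigma<0$ are exactly what is needed to keep (i) and (iii) strict under the continuity passage to $\sigma^*$; this self-reinforcing structure leaves (ii) as the sole remaining failure mode, which the problem's own boundary condition identifies with the piston $\sigma=b_0$. Closing this loop is the crux of the argument.
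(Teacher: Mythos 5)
Your Step 1 is sound and matches the paper's: the Cramer-type formulas you write are exactly \eqref{eq:EE3}, the signs at $\sigma=s_0$ come from Lemma \ref{lem:RHC} just as in the paper, and your observation that $(\vartheta-\sigma)^2-\gamma\varrho^{\gamma-1}$ is increasing under the induced monotonicities (so that condition (i) cannot be the first to fail) is a correct, slightly more explicit variant of the paper's use of the product identity $\sigma^2((\vartheta-\sigma)^2-\gamma\varrho^{\gamma-1})^2\varrho_\sigma\vartheta_\sigma=2\gamma\varrho^\gamma\vartheta^2(\sigma-\vartheta)$.

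The gap is in your final step. Having reduced the failure mode to $\vartheta(\sigma^*)=\sigma^*$, you conclude that ``this is precisely the piston boundary condition, forcing $\sigma^*=b_0$.'' That is a non sequitur: the boundary condition asserts $\vartheta(b_0)=b_0$, but it does not assert that $b_0$ is the \emph{only} root of $\vartheta(\sigma)=\sigma$ in $[b_0,s_0)$. An interior point with $\vartheta(\sigma^*)=\sigma^*$ is not excluded by the local ODE analysis: at such a point the system \eqref{eq:Constantmovingspeedcase} still determines $\vartheta_\sigma(\sigma^*)=-2$ and $\varrho_\sigma(\sigma^*)=0$, so $\vartheta-\sigma$ simply crosses zero there and the solution continues with $\vartheta>\sigma$ on the left of $\sigma^*$; nothing blows up and nothing contradicts $\vartheta(b_0)=b_0$ further left. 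Ruling this out is exactly the content of the paper's Step 2, which is a genuinely global argument: one integrates the continuity equation in the physical coordinates $(\vec x,t)$ over the two space--time cones $\mathsf{A}_\mathsf{T}$ and $\mathsf{B}_\mathsf{T}$ bounded respectively by $|\vec x|=\sigma_1 t$ and $|\vec x|=b_0 t$ (both slip surfaces, so no mass flux crosses them), concluding that the mass in the annulus $(\sigma_1\mathsf{T},s_0\mathsf{T})$ equals the mass in the larger annulus $(b_0\mathsf{T},s_0\mathsf{T})$; since $\varrho\ge\varrho(s_0)/2>0$ on a subinterval $[\sigma_2,\sigma_1]$ of positive length (using the monotonicity already established to the right of $\sigma_1$), this is a contradiction unless $\sigma_1=b_0$. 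Your proof needs this (or an equivalent global uniqueness-of-the-slip-surface argument) to close the bootstrap; as written, the last sentence assumes what is to be proved.
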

\begin{proof}
    Define $$\mathsf{A}:=\{\sigma: \sigma\in [b_0,s_0),\vartheta(\sigma)=\sigma\},$$ and let
    $$\sigma_1:=\sup\mathsf{A}.$$
    Since $b_0\in \mathsf{A},$ $\sigma_1$ is well-defined. Moreover, by the continuity of $\vartheta(\sigma),\sigma\in(b_0,s_0)$, \eqref{eq:E1} implies that $\sigma_1<s_0.$ 
    
    The following proof is divided into two steps: first we show that \eqref{eq:MonotonicRelationsforconstantcase} holds on the interval $(\sigma_1,s_0);$ second we establish $\sigma_1=b_0.$

    \textbf{Step 1:}
    We claim that \eqref{eq:MonotonicRelationsforconstantcase} holds on the interval $(\sigma_1,s_0)$. Indeed, according to the first two equations of \eqref{eq:Constantmovingspeedcase}, a direct computation shows that 
    \begin{equation}\label{eq:EE3}
       \left\{
       \begin{aligned}
           &\sigma((\vartheta-\sigma)^2-\gamma\varrho^{\gamma-1})\varrho_{\sigma}=2\varrho\vartheta(\sigma-\vartheta),\\
           &\sigma((\vartheta-\sigma)^2-\gamma\varrho^{\gamma-1})\vartheta_{\sigma}=2\gamma\varrho^{\gamma-1}\vartheta.
       \end{aligned}
       \right.
    \end{equation}
    which together with  Lemma \ref{lem:RHC}, implies 
    \begin{equation}\label{eq:monotonicrelationats0}
       \left\{
       \begin{aligned}
           &\varrho_{\sigma}(s_0)<0,\\
           &\vartheta_{\sigma}(s_0)<0.
       \end{aligned}
       \right.
    \end{equation}
    Moreover, It directly follows from \eqref{eq:EE3} that 
    \begin{equation}
        \sigma^2((\vartheta-\sigma)^2-\gamma\varrho^{\gamma-1})^2\varrho_{\sigma}\vartheta_\sigma=2\gamma\varrho^\gamma\vartheta^2(\sigma-\vartheta),
    \end{equation}
    which together with $\displaystyle \sigma-\vartheta(\sigma)>0 \text{ for }\sigma\in (\sigma_1,s_0)$ following from the definition of $\sigma_1$, yields that
    \begin{equation}\label{eq:monotonicrelationontheinterval}
        \varrho_{\sigma}\vartheta_\sigma>0, \text{ for }\sigma\in (\sigma_1,s_0). 
    \end{equation}
    Therefore, combining \eqref{eq:monotonicrelationats0} and \eqref{eq:monotonicrelationontheinterval}, according to the continuity of $\varrho_\sigma$ and $\vartheta_\sigma,$ we arrive that \eqref{eq:MonotonicRelationsforconstantcase} holds on the interval $(\sigma_1,s_0)$.  

    \textbf{Step 2:} We claim that $\sigma_1=b_0.$ Otherwise, due to the continuity of $\varrho,$ there exists $\sigma_2\in(b_0,\sigma_1)$ such that 
    \begin{equation}\label{eq:localpositiveondensity} 
    \begin{aligned}    
    &\varrho>\varrho(s_0)/2>0, \text{ for }\sigma\in[\sigma_2,\sigma_1].
    \end{aligned}
    \end{equation}
    Here we apply the fact that \eqref{eq:MonotonicRelationsforconstantcase} holding on the interval $(\sigma_1,s_0)$ implies $\varrho(\sigma_1)>\varrho(s_0).$ 
    
    We next consider the self-similar flow field in the coordinates $(\vec{x},t)\in \mathbb{R}^3\times\mathbb{R}_+,$ and 
    derive the contradiction from \eqref{eq:localpositiveondensity}.  
    
    For fixed positive constant $\mathsf{T}$, define 
     $$\mathsf{A}_\mathsf{T}:=\{(\vec{x},t):t\in(0,\mathsf{T}), \big|\vec{x}\big|\in(\sigma_1t,s_0t)\cup(s_0t,s_0\mathsf{T})\},$$ 
     and
     $$\mathsf{B}_\mathsf{T}:=\{(\vec{x},t): t\in(0,\mathsf{T}), \big|\vec{x}\big|\in(b_0t,s_0t)\cup(s_0t,s_0\mathsf{T})\}.$$ 
    Recalling \eqref{coodinatestransform1} and \eqref{coodinatestransform2}, there holds that 
    \begin{equation}
        \pi(\vec{x},t)=\varrho(\big|\vec{x}\big|/t),~ \vec{V}(\vec{x},t)=\vartheta(\big|\vec{x}\big|/t)\frac{\vec{x}}{\big|\vec{x}\big|}
    \end{equation}
satisfy \eqref{E1}-\eqref{E4} with $s(t)=s_0t,~ b(t)=b_0t.$

     Then, integrating the first formula in \eqref{E1} over $\mathsf{A}_\mathsf{T}$ and $\mathsf{B}_\mathsf{T}$ respectively, and applying Stokes formulas, leads to 
     \begin{equation}\label{eq:Integrationbyparts}
        \begin{aligned}
            &\iiint_{\big|\vec{x}\big|\in(\sigma_1\mathsf{T}, s_0\mathsf{T})}\pi(\vec{x},\mathsf{T}) \dif \vec{x}= \iiint_{\big|\vec{x}\big|\in(0, s_0\mathsf{T})}\rho_\infty \dif \vec{x},\\
            &\iiint_{\big|\vec{x}\big|\in(b_0\mathsf{T}, s_0\mathsf{T})}\pi(\vec{x},\mathsf{T}) \dif \vec{x}= \iiint_{\big|\vec{x}\big|\in(0, s_0\mathsf{T})}\rho_\infty \dif \vec{x},
        \end{aligned}
     \end{equation}
     where we use Rankine-Hugoniot condition to eliminate the integration on the shock surface $\{(\vec{x},t): t\in(0,\mathsf{T}),\big|\vec{x}\big|=s_0t\}$ and use the slip boundary condition to eliminate the integration on the surfaces $\{(\vec{x},t):  t\in(0,\mathsf{T}),\big|\vec{x}\big|=\sigma_1t\}$ and $\{(\vec{x},t):  t\in(0,\mathsf{T}),\big|\vec{x}\big|=b_0t\}.$ 

     Note that $\displaystyle \varrho(\sigma)\geq 0, \sigma\in(b_0,s_0)$ and that $\displaystyle (b_0\mathsf{T},s_0\mathsf{T})=(b_0\mathsf{T},\sigma_2\mathsf{T})\cup(\sigma_2\mathsf{T},\sigma_1\mathsf{T})\cup(\sigma_1\mathsf{T},s_0\mathsf{T}).$ Substituting \eqref{eq:localpositiveondensity} into \eqref{eq:Integrationbyparts} leads to the contradiction. Thus, we conclude that $\sigma_1=b_0.$

    Finally, combining the two claims established in the preceding steps, we complete the proof.
\end{proof}

\begin{remark}\label{rem:localexsitence}
    As an additional observation, with a minor adjustment, regarding the claim in Step 1 in the proof of Lemma \ref{lem:Monotonic} as a priori estimates for problem \eqref{eq:Constantmovingspeedcase}, we can use continuity argument to establish the solvability of problem \eqref{eq:Constantmovingspeedcase} for $\rho_\infty$ suitably close to zero, referring to  \cite[Theorem 3.1]{li2024hypersonic} for details on the case that uniform hypersonic flows past a straight cone. Moreover, the argument in Step 2 in the proof of Lemma \ref{lem:Monotonic} guarantees the uniqueness of such a self-similar flow configuration.  
\end{remark}

Before proceeding further, we define $\mathcal{O}_+(\rho_\infty^\alpha), \alpha\in \mathbb{R},$ which is the most important notation in this article. This definition not only simplifies the presentation but also enhances the reader's understanding of the content.

\begin{definition}\label{def:defofO}
    Let $\mathcal{T}$ denote a positive quantity associated with $(r,t)$ and $\rho_\infty.$ We say  
    \begin{equation*}
        \mathcal{T}=\mathcal{O}_{+}(\rho^{\alpha}_{\infty}), 
    \end{equation*}
    for some $\alpha\in\mathbb{R},$ if there exist  positive constants $\mathsf{m}\in(0,1), \mathsf{M}\in(1,+\infty)$ independent of $(r,t)$ and $\rho_{\infty}$ such that when $\rho_\infty\in (0,\mathsf{m}),$
      	\begin{equation}\label{061711}
      	\mathsf{M}^{-1}\rho_\infty^\alpha<\mathcal{T}<\mathsf{M}\rho^{\alpha}_{\infty}.
      	\end{equation}
        Without confusion, we denote $\mathcal{O}_+(\rho_\infty^0)$ as $\mathcal{O}_+(1)$.
\end{definition}

Now, we are ready to figure out he quantitative properties of $(\varrho(\sigma), \vartheta(\sigma)), \sigma\in(b_0,s_0).$ 

\begin{thm}[Zeroth-order estimates for the self-similar flow field]\label{thm:VarianceofSelfsimilarSolution}
    For any given $s_0\in(\kappa_1,\kappa_2)$ with $\kappa_1,\kappa_2$ given in Theorem \ref{thm1}, let $(\varrho(\sigma),\vartheta(\sigma)), \sigma\in(s_0,b_0)$ be the $C^1$ solution of problem \eqref{eq:Constantmovingspeedcase}.
    For  $\rho_{\infty}$ close enough to zero, there holds that for $\sigma\in(b_0,s_0),$
    \begin{equation}\label{eq:Asympototicrelation}
     \begin{aligned}
        \varrho(\sigma)= \mathcal{O}_+(\rho_{\infty}^{\frac{1}{\gamma}}),  ~0<s_0-\vartheta(\sigma)\leq \mathcal{T}_1,
     \end{aligned}
    \end{equation}
   for some $\mathcal{T}_1 = \mathcal{O}_+(\rho_{\infty}^{\frac{\gamma-1}{\gamma}}).$
\end{thm}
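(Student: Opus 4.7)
The plan is to first extract the asymptotic behaviour of $(\varrho,\vartheta)$ at $\sigma = s_0$ from the Rankine--Hugoniot relations, and then propagate these shock-value estimates across the whole interval $(b_0,s_0)$ by combining the monotonicity of Lemma~\ref{lem:Monotonic} with a one-dimensional energy-type integration of the momentum equation in \eqref{eq:Constantmovingspeedcase}. I would simply set $\mathcal{T}_1 := s_0 - \vartheta(s_0)$, so that only the order of this single quantity needs tracking.

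At the shock I would use the reduced equation \eqref{eq:RTT2}, namely $f(k) = s_0^2\rho_\infty^{1-\gamma}$ with $k = \varrho(s_0)/\rho_\infty$. Since $\gamma \in (1,3)$, the right-hand side blows up as $\rho_\infty \to 0+$, forcing $k \to +\infty$. The elementary identity
\[
\frac{f(k)}{k^\gamma} \;=\; \frac{1 - k^{-\gamma}}{1 - k^{-1}}
\]
is pinched between two positive $\gamma$-dependent constants for $k \geq 2$, so $k^\gamma$ is of the same order as $s_0^2 \rho_\infty^{1-\gamma}$ uniformly in $s_0 \in (\kappa_1,\kappa_2)$, giving $k = \mathcal{O}_+(\rho_\infty^{-(\gamma-1)/\gamma})$. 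Hence $\varrho(s_0) = k\rho_\infty = \mathcal{O}_+(\rho_\infty^{1/\gamma})$, and the identity $\vartheta(s_0) = s_0(1 - 1/k)$ from the second line of \eqref{eq:E1} produces $s_0 - \vartheta(s_0) = s_0/k = \mathcal{O}_+(\rho_\infty^{(\gamma-1)/\gamma})$.

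For the interior estimates on $(b_0,s_0)$, the bounds on $s_0 - \vartheta(\sigma)$ come essentially for free from monotonicity. Since $\vartheta_\sigma < 0$, we have $\vartheta(\sigma) \geq \vartheta(s_0)$ and hence $s_0 - \vartheta(\sigma) \leq s_0 - \vartheta(s_0) = \mathcal{T}_1$; in the other direction the piston condition $\vartheta(b_0) = b_0$ together with monotonicity gives $\vartheta(\sigma) \leq b_0 < s_0$, so $s_0 - \vartheta(\sigma) > 0$ (and, as a byproduct, $s_0 - b_0 \leq \mathcal{T}_1$, revealing that the piston already hugs the shock). The lower bound $\varrho(\sigma) \geq \varrho(s_0) = \mathcal{O}_+(\rho_\infty^{1/\gamma})$ is equally immediate from $\varrho_\sigma < 0$.

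The main obstacle I expect is the matching upper bound $\varrho(\sigma) \leq \mathcal{O}_+(\rho_\infty^{1/\gamma})$ at points possibly close to $b_0$, where monotonicity alone pushes the wrong way. My approach is to rewrite the momentum equation in \eqref{eq:Constantmovingspeedcase} as $\tfrac{\gamma}{\gamma-1}(\varrho^{\gamma-1})_\sigma = (\sigma - \vartheta)\vartheta_\sigma$ and integrate from $\sigma$ to $s_0$, obtaining
\[
\varrho(\sigma)^{\gamma-1} - \varrho(s_0)^{\gamma-1} \;=\; -\frac{\gamma-1}{\gamma}\int_{\sigma}^{s_0}(\tau - \vartheta(\tau))\,\vartheta_\tau\,\mathrm d\tau.
\]
Bounding $\tau - \vartheta(\tau) \leq \mathcal{T}_1$ and $\int_\sigma^{s_0}|\vartheta_\tau|\,\mathrm d\tau = \vartheta(\sigma) - \vartheta(s_0) \leq b_0 - \vartheta(s_0) \leq \mathcal{T}_1$, the right-hand side is controlled by $\mathcal{T}_1^2 = \mathcal{O}_+(\rho_\infty^{2(\gamma-1)/\gamma})$, which is of strictly higher order than $\varrho(s_0)^{\gamma-1} = \mathcal{O}_+(\rho_\infty^{(\gamma-1)/\gamma})$. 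Hence for $\rho_\infty$ small, $\varrho(\sigma)^{\gamma-1} \leq \varrho(s_0)^{\gamma-1}(1 + o(1))$, and taking $(\gamma-1)$-th roots closes the upper bound. All the technical care goes into keeping constants explicit so that the $\mathcal{O}_+$-bounds remain uniform in $s_0 \in (\kappa_1,\kappa_2)$.
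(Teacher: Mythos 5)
Your proposal is correct, and its skeleton matches the paper's: the shock values are extracted from the reduced Rankine--Hugoniot relation \eqref{eq:RTT2} via $k=\varrho(s_0)/\rho_\infty$ (your pinching of $f(k)/k^\gamma$ is a clean way to get $k=\mathcal{O}_+(\rho_\infty^{(1-\gamma)/\gamma})$, matching \eqref{eq:E7}), and the bound on $s_0-\vartheta(\sigma)$ with $\mathcal{T}_1=s_0-\vartheta(s_0)=k^{-1}s_0$ is exactly the paper's use of Lemma \ref{lem:Monotonic}. Where you genuinely diverge is the interior upper bound on $\varrho$: the paper works with the resolved ODE system \eqref{eq:EE3}, invokes the non-degeneracy $(\vartheta-\sigma)^2-\gamma\varrho^{\gamma-1}<0$ propagated from \eqref{eq:RHC} to bound $\big|(\log\varrho)_\sigma\big|=\mathcal{O}_+(1)$, and integrates over the short interval $|s_0-b_0|\leq\mathcal{T}_1$; you instead integrate the momentum equation directly in the form $\tfrac{\gamma}{\gamma-1}(\varrho^{\gamma-1})_\sigma=(\sigma-\vartheta)\vartheta_\sigma$ and control the right-hand side by $\mathcal{T}_1^2=\mathcal{O}_+(\rho_\infty^{2(\gamma-1)/\gamma})$, which is lower order than $\varrho(s_0)^{\gamma-1}=\mathcal{O}_+(\rho_\infty^{(\gamma-1)/\gamma})$. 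Your route is more elementary in that it bypasses the eigenvalue non-degeneracy estimate \eqref{eq:MontonicR2} entirely and needs only the sign information from Lemma \ref{lem:Monotonic} plus the shortness of $(b_0,s_0)$; the paper's route has the advantage that the $\mathcal{O}_+(1)$ bound on $(\log\varrho)_\sigma$ is reused almost verbatim for the first-order estimates in Theorem \ref{thm:DerivativeEstSelSimilarSolution}, so the non-degeneracy computation is not wasted there. Both arguments are uniform in $s_0\in(\kappa_1,\kappa_2)$ as required.
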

\begin{proof}
    Using the notation $\displaystyle k=\frac{\varrho(s_0)}{\rho_{\infty}},$ we rewrite \eqref{eq:E2} into 
    \begin{equation}\label{eq:CRHC1}
        s_0^2=\frac{k(k^\gamma-1)}{k-1}\rho_\infty^{\gamma-1}.
    \end{equation}
    Since $s_0\in(\kappa_1,\kappa_2)$, and $k>1$ implied by  the entropy condition, we have that
    \begin{equation}
    \lim\sup_{\rho_{\infty}\to0+}k=\lim\inf_{\rho_{\infty}\to0+}k=+\infty,
    \end{equation}
    which implies 
    \begin{equation}\label{eq:CRHC2}
        \lim_{\rho_\infty\to0+}k=+\infty, ~\lim_{\rho_\infty\to0+}\frac{1}{k}=0. 
    \end{equation}

    Moreover, we derive from \eqref{eq:CRHC1} that 
    \begin{equation}
        k\rho_\infty^{\frac{\gamma-1}{\gamma}}=(\frac{k-1}{k}s_0^2+\rho_{\infty}^{\gamma-1})^{\frac{1}{\gamma}},
    \end{equation}
which together with \eqref{eq:CRHC2} and $\displaystyle s_0\in(\kappa_1,\kappa_2),$ gives that
\begin{equation}\label{eq:E7}
    k=\mathcal{O}_+(\rho^{\frac{1-\gamma}{\gamma}}_{\infty}),~~\text{i.e.,}~~ k^{-1}= \mathcal{O}_+(\rho^{\frac{\gamma-1}{\gamma}}_{\infty}). 
\end{equation}
That is, 
\begin{equation}\label{eq:E8}
    \varrho(s_0)=\mathcal{O}_+(\rho_{\infty}^{\frac{1}{\gamma}}).
\end{equation}

Recalling the the monotonic relation of $\vartheta$ in Lemma \ref{lem:Monotonic} and applying the second formula in \eqref{eq:E1}, a direct computation shows that
\begin{equation}\label{eq:E6}
0<s_0-\vartheta(\sigma)\leq s_0-\vartheta(s_0)=k^{-1}s_0,
\end{equation}
which proves the second formula in \eqref{eq:Asympototicrelation} by taking $\mathcal{T}_1=k^{-1}s_0=\mathcal{O}_+(\rho_{\infty}^{\frac{\gamma-1}{\gamma}}).$ 

We next analyze the variation of $\varrho$ in the interval $(b_0,s_0).$ The monotonic relation in Lemma \ref{lem:Monotonic} implies that 
\begin{equation}\label{eq:MontonicR1}
0\leq\sigma-\vartheta(\sigma)\leq s_0-\vartheta(s_0), ~~\varrho(\sigma)\ge \varrho(s_0)>0, 
\end{equation}
which together with \eqref{eq:RHC}, leads to 
\begin{equation}\label{eq:MontonicR2}
(\vartheta(\sigma)-\sigma)^2-\gamma\varrho^{\gamma-1}(\sigma)\leq(\vartheta(s_0)-s_0)^2-\gamma\varrho^{\gamma-1}(s_0)<0.
\end{equation}
Thus, substituting \eqref{eq:MontonicR1} and \eqref{eq:MontonicR2} into the first formula in \eqref{eq:EE3} yields that for $\rho_\infty$ close enough to zero,
\begin{equation}\label{eq:Ew9}
\begin{aligned}
\big|\log(\varrho)_{\sigma}\big|\leq \frac{2(s_0-\vartheta(s_0))}{\big|(\vartheta(s_0)-s_0)^2-\gamma\varrho^{\gamma-1}(s_0)\big|}.
\end{aligned}
\end{equation}
Moreover, noting \eqref{eq:E7}-\eqref{eq:E6}, we have that the right part in \eqref{eq:Ew9} equals $\mathcal{O}_+(1).$ That is, there exists $\mathcal{T}_2=\mathcal{O}_+(1)$ such that 
\begin{equation}\label{eq:E9}
    \begin{aligned}
        \big|\log(\varrho)_{\sigma}\big|\leq \mathcal{T}_2.
    \end{aligned}
\end{equation}

Together with $\varrho_\sigma<0$ in Lemma \ref{lem:Monotonic} again, integrating \eqref{eq:E9} and substituting \eqref{eq:E6} lead to  
\begin{equation}
   1<\frac{\varrho(\sigma)}{\varrho(s_0)}\leq \exp{\mathcal{T}_3},
\end{equation}
for some $\displaystyle \mathcal{T}_3=\mathcal{O}_+(\rho_{\infty}^{\frac{\gamma-1}{\gamma}}),$ 
which together with \eqref{eq:E8} implies that for $\sigma\in(b_0,s_0),$
\begin{equation}
    \varrho(\sigma)=\mathcal{O}_+(\rho_\infty^{\frac{1}{\gamma}}).
\end{equation}
The proof is complete.
\end{proof}

We further derive the derivative estimates for the self-similar flow field. 
\begin{thm}[First-order estimates for the self-similar flow field]\label{thm:DerivativeEstSelSimilarSolution}
    For any given $s_0\in(\kappa_1,\kappa_2)$ with $\kappa_1,\kappa_2$ given in Theorem \ref{thm1}, let $(\varrho(\sigma),\vartheta(\sigma)), \sigma\in(s_0,b_0)$ be the $C^1$ solution of problem \eqref{eq:Constantmovingspeedcase}. For $\rho_{\infty}$ close to zero, there exist $\displaystyle\mathcal{T}_4=\mathcal{O}_+(\rho_{\infty}^{\frac{1}{\gamma}}), \mathcal{T}_5=\mathcal{O}_+(1)$ such that 
    \begin{equation}\label{eq:AsympDerivartive}
       \begin{aligned}
            \big|\varrho_{\sigma}\big|\leq \mathcal{T}_4, ~\big|\vartheta_{\sigma}\big|\leq \mathcal{T}_5.
       \end{aligned}
    \end{equation}
\end{thm}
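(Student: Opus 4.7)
The plan is to read off explicit expressions for $\varrho_\sigma$ and $\vartheta_\sigma$ from the already-rearranged system \eqref{eq:EE3} and then substitute the zeroth-order asymptotics of Theorem~\ref{thm:VarianceofSelfsimilarSolution} together with the monotonicity from Lemma~\ref{lem:Monotonic}. Solving \eqref{eq:EE3} gives
\begin{equation*}
\varrho_\sigma = \frac{2\varrho\vartheta(\sigma-\vartheta)}{\sigma\bigl((\vartheta-\sigma)^2-\gamma\varrho^{\gamma-1}\bigr)},\qquad
\vartheta_\sigma = \frac{2\gamma\varrho^{\gamma-1}\vartheta}{\sigma\bigl((\vartheta-\sigma)^2-\gamma\varrho^{\gamma-1}\bigr)},
\end{equation*}
so the proof reduces to an orders-of-magnitude accounting on the right-hand sides.

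The key preliminary step is to control the denominator. From Theorem~\ref{thm:VarianceofSelfsimilarSolution} we have $\varrho=\mathcal{O}_+(\rho_\infty^{1/\gamma})$ and $0<s_0-\vartheta(\sigma)\leq \mathcal{T}_1=\mathcal{O}_+(\rho_\infty^{(\gamma-1)/\gamma})$; since $\sigma\leq s_0$ this immediately yields $0\leq \sigma-\vartheta(\sigma)\leq s_0-\vartheta(\sigma)\leq \mathcal{T}_1$ on $(b_0,s_0)$. In particular $(\vartheta-\sigma)^2$ is at most of order $\rho_\infty^{2(\gamma-1)/\gamma}$, whereas $\gamma\varrho^{\gamma-1}$ is of order $\rho_\infty^{(\gamma-1)/\gamma}$. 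Since $\gamma>1$, the first exponent is strictly larger, so for $\rho_\infty$ sufficiently small the quadratic term is negligible and
\begin{equation*}
\bigl|(\vartheta-\sigma)^2-\gamma\varrho^{\gamma-1}\bigr|\geq \tfrac{1}{2}\gamma\varrho^{\gamma-1}=\mathcal{O}_+\bigl(\rho_\infty^{(\gamma-1)/\gamma}\bigr).
\end{equation*}

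With the denominator pinned down, the two bounds follow by direct substitution. For $\varrho_\sigma$, the numerator is bounded by $2\varrho\vartheta\mathcal{T}_1$, which is of order $\rho_\infty^{1/\gamma}\cdot 1\cdot \rho_\infty^{(\gamma-1)/\gamma}=\rho_\infty$; dividing by the denominator of order $\rho_\infty^{(\gamma-1)/\gamma}$ gives $|\varrho_\sigma|\leq\mathcal{O}_+(\rho_\infty^{1/\gamma})$. For $\vartheta_\sigma$, the numerator $2\gamma\varrho^{\gamma-1}\vartheta$ is itself of order $\rho_\infty^{(\gamma-1)/\gamma}$, so cancellation with the denominator yields $|\vartheta_\sigma|\leq\mathcal{O}_+(1)$. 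The prefactor $1/\sigma$ is harmless since $b_0=\vartheta(b_0)>\vartheta(s_0)>0$ is bounded below uniformly in $\rho_\infty$ for $\rho_\infty$ small.

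The main obstacle I anticipate is verifying the dominance $(\vartheta-\sigma)^2\ll\gamma\varrho^{\gamma-1}$ uniformly on $(b_0,s_0)$. This is precisely the quantitative form of the loss-of-strict-hyperbolicity phenomenon flagged in the introduction: $(\vartheta-\sigma)^2-\gamma\varrho^{\gamma-1}$ is, up to $\sigma$-factors, the discriminant determining the characteristic speeds, and the whole argument collapses if it approaches zero. The favourable point is that only an upper bound on $\sigma-\vartheta$ is needed, supplied by $\mathcal{T}_1$ from Theorem~\ref{thm:VarianceofSelfsimilarSolution}, and the asymmetric scaling $\rho_\infty^{2(\gamma-1)/\gamma}$ versus $\rho_\infty^{(\gamma-1)/\gamma}$ makes the balance work for every $\gamma\in(1,3)$.
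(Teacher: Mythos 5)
Your proposal is correct and follows essentially the same route as the paper: both read off the explicit formulas for $\varrho_\sigma,\vartheta_\sigma$ from \eqref{eq:EE3} and then substitute the zeroth-order asymptotics of Theorem \ref{thm:VarianceofSelfsimilarSolution}. The only (harmless) difference is in the lower bound for the denominator: the paper invokes the monotonicity inequality \eqref{eq:MontonicR2} to reduce to the value at $\sigma=s_0$ and then uses the explicit formula \eqref{eq:E3}, whereas you compare orders pointwise, observing that $(\vartheta-\sigma)^2=\mathcal{O}_+(\rho_\infty^{2(\gamma-1)/\gamma})$ is dominated by $\gamma\varrho^{\gamma-1}=\mathcal{O}_+(\rho_\infty^{(\gamma-1)/\gamma})$ — both yield the same $\mathcal{O}_+(\rho_\infty^{(\gamma-1)/\gamma})$ bound.
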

\begin{proof}
    Noting \eqref{eq:MonotonicRelationsforconstantcase} and  substituting \eqref{eq:MontonicR1} and \eqref{eq:MontonicR2} into \eqref{eq:EE3} yield
    \begin{equation}\label{eq:E99}
        \begin{aligned}
            &\big|\varrho_{\sigma}\big|\leq \frac{2\varrho(s_0-\vartheta(s_0))}{\big|(\vartheta(s_0)-s_0)^2-\gamma\varrho^{\gamma-1}(s_0)\big|},\\
            &\big|\vartheta_{\sigma}\big|\leq \frac{2\gamma\varrho^{\gamma-1}}{\big|(\vartheta(s_0)-s_0)^2-\gamma\varrho^{\gamma-1}(s_0)\big|}.
        \end{aligned}
    \end{equation}
    According to \eqref{eq:E8} and \eqref{eq:E6}, we have that for $\rho_\infty$ close to zero,
    \begin{equation}\label{eq:Middle1}
        \big|(\vartheta(s_0)-s_0)^2-\gamma\varrho^{\gamma-1}(s_0)\big|=\mathcal{O}_+(\rho_\infty^{\frac{\gamma-1}{\gamma}}), ~\big|s_0-\vartheta(s_0)\big|=\mathcal{O}_+(\rho_\infty^{\frac{\gamma-1}{\gamma}}).
    \end{equation}
    
    Finally, plugging \eqref{eq:Asympototicrelation} and \eqref{eq:Middle1} into \eqref{eq:E99} yields \eqref{eq:AsympDerivartive}. The proof is complete. 
\end{proof}

As a corollary of Theorem \ref{thm:VarianceofSelfsimilarSolution}  and Theorem \ref{thm:DerivativeEstSelSimilarSolution}, we establish the following $C^1$ estimates on the Riemann invariants $\omega_\pm(\sigma)$ defined for the self-similar flow configuration.   

\begin{thm}\label{thm:C01E}
Let $(\varrho(\sigma),\vartheta(\sigma)), \sigma\in(s_0,b_0)$ be the $C^1$ solution of problem \eqref{eq:Constantmovingspeedcase}, with given $s_0\in(\kappa_1,\kappa_2)$. 
    Let $$\displaystyle\omega_{\pm}(\sigma):=\vartheta\pm\frac{2\sqrt{\gamma}}{\gamma-1}\varrho^{\frac{\gamma-1}{2}}$$ be the Riemann invariants defined for the self-similar flow configuration. Then, for $\rho_{\infty}$ close enough to zero, there exist $\displaystyle\mathcal{T}_6=\mathcal{O}_+(\rho_\infty^{\frac{\gamma-1}{\gamma}}),  ~\mathcal{T}_7=\mathcal{O}_+(1)$ such that 
    \begin{equation}\label{eq:RGHH}
    \begin{aligned}
        \big|\omega_{\pm}(s_0)-\omega_{\pm}(\sigma)\big|\leq \mathcal{T}_6, \big|t\partial_r\omega\big|\leq \mathcal{T}_7.
    \end{aligned}
    \end{equation}
\end{thm}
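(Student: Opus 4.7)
\textbf{Proof plan for Theorem \ref{thm:C01E}.}

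The plan is to reduce everything to the bounds on $\varrho,\vartheta$ and their derivatives from Theorems \ref{thm:VarianceofSelfsimilarSolution} and \ref{thm:DerivativeEstSelSimilarSolution}, keeping careful track of the exponents of $\rho_\infty$. First I note that under the self-similar ansatz \eqref{coodinatestransform2}, $\omega_\pm$ depends only on $\sigma=r/t$, so
\begin{equation*}
t\,\partial_r\omega_\pm = \omega_{\pm,\sigma} = \vartheta_\sigma \pm \sqrt{\gamma}\,\varrho^{(\gamma-3)/2}\varrho_\sigma.
\end{equation*}
Thus the second bound in \eqref{eq:RGHH} amounts to an estimate on $\omega_{\pm,\sigma}$.

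For the first bound, the key observation is that the whole interval $(b_0,s_0)$ over which we integrate is already $\mathcal{O}_+(\rho_\infty^{(\gamma-1)/\gamma})$-short. Indeed, using the piston condition $\vartheta(b_0)=b_0$ together with \eqref{eq:Asympototicrelation} we get
\begin{equation*}
s_0-b_0=s_0-\vartheta(b_0)\le \mathcal{T}_1=\mathcal{O}_+(\rho_\infty^{(\gamma-1)/\gamma}).
\end{equation*}
Then I would split $\omega_\pm(s_0)-\omega_\pm(\sigma)$ into the velocity part $\vartheta(s_0)-\vartheta(\sigma)$ and the density part $\frac{2\sqrt{\gamma}}{\gamma-1}(\varrho^{(\gamma-1)/2}(s_0)-\varrho^{(\gamma-1)/2}(\sigma))$. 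For the first, Theorem \ref{thm:DerivativeEstSelSimilarSolution} gives $|\vartheta_\sigma|\le\mathcal{T}_5=\mathcal{O}_+(1)$, so
\begin{equation*}
|\vartheta(s_0)-\vartheta(\sigma)|\le\mathcal{T}_5(s_0-b_0)=\mathcal{O}_+(\rho_\infty^{(\gamma-1)/\gamma}).
\end{equation*}
For the density part I would use $|(\varrho^{(\gamma-1)/2})_\sigma|\le \tfrac{\gamma-1}{2}\varrho^{(\gamma-3)/2}|\varrho_\sigma|$, insert $\varrho=\mathcal{O}_+(\rho_\infty^{1/\gamma})$ and $|\varrho_\sigma|\le\mathcal{T}_4=\mathcal{O}_+(\rho_\infty^{1/\gamma})$, and again multiply by $s_0-b_0$; this actually gives an even smaller $\mathcal{O}_+(\rho_\infty^{3(\gamma-1)/(2\gamma)})$ contribution, which is absorbed. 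Setting $\mathcal{T}_6$ equal to a suitable constant multiple of $\rho_\infty^{(\gamma-1)/\gamma}$ yields the first inequality.

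For the second bound I would just feed the same asymptotics into the formula for $\omega_{\pm,\sigma}$: the $\vartheta_\sigma$ term is already $\mathcal{O}_+(1)$, while
\begin{equation*}
|\varrho^{(\gamma-3)/2}\varrho_\sigma|=\mathcal{O}_+(\rho_\infty^{(\gamma-3)/(2\gamma)})\cdot\mathcal{O}_+(\rho_\infty^{1/\gamma})=\mathcal{O}_+(\rho_\infty^{(\gamma-1)/(2\gamma)}),
\end{equation*}
which is $o(1)$ for $\gamma>1$ and is therefore dominated by the $\vartheta_\sigma$ term. Choosing $\mathcal{T}_7$ of size $\mathcal{O}_+(1)$ completes the argument. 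The only place that requires real attention is confirming $(\gamma-3)/(2\gamma)+1/\gamma\ge 0$ when $\gamma\in(1,3)$, i.e.\ that the density contribution to $\omega_{\pm,\sigma}$ does not blow up as $\rho_\infty\to 0$; this is precisely the balance that makes the loss-of-hyperbolicity degeneracy described in the introduction harmless at the level of the Riemann invariants, and is the main conceptual point of the lemma.
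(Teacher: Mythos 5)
Your proposal is correct and follows essentially the same route as the paper: identify $t\partial_r\omega_\pm$ with $\omega_{\pm,\sigma}=\vartheta_\sigma\pm\sqrt{\gamma}\,\varrho^{(\gamma-3)/2}\varrho_\sigma$, bound it via Theorems \ref{thm:VarianceofSelfsimilarSolution} and \ref{thm:DerivativeEstSelSimilarSolution} (the exponent bookkeeping $(\gamma-3)/(2\gamma)+1/\gamma=(\gamma-1)/(2\gamma)>0$ is exactly the paper's computation in \eqref{eq:RT1}), and integrate over the interval of length $s_0-b_0\le\mathcal{T}_1=\mathcal{O}_+(\rho_\infty^{(\gamma-1)/\gamma})$ obtained from $b_0=\vartheta(b_0)$. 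Your separate treatment of the velocity and density contributions to the first bound is a harmless refinement of the paper's single uniform bound $\mathcal{T}_6=\mathcal{T}_1\mathcal{T}_7$.
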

\begin{proof}
    By Theorem \ref{thm:VarianceofSelfsimilarSolution} and Theorem \ref{thm:DerivativeEstSelSimilarSolution}, a direct computation shows that for $\rho_\infty$ close to zero,
    \begin{equation}\label{eq:RT1}
        \begin{aligned}
        \big|\frac{\dif \omega_\pm(\sigma)}{\dif \sigma}\big|&=\big|\vartheta_\sigma\pm\sqrt{\gamma}\varrho^{\frac{\gamma-3}{2}}\varrho_\sigma\big|\leq\big|\vartheta_\sigma\big|+\sqrt{\gamma}\varrho^{\frac{\gamma-3}{2}}\big|\varrho_\sigma\big|\\&\leq \mathcal{T}_5+\sqrt{\gamma}\frac{\mathcal{T}_4}{\varrho^{\frac{3-\gamma}{2}}} =\mathcal{O}_+(1).
        \end{aligned}
    \end{equation}

According to $\sigma=r/t,$ there holds that 
\begin{equation}\label{eq:RH6}
    \big|t\partial_r\omega\big|=\big|t\partial_r\sigma\frac{\mathsf{d \omega_\pm}}{\dif \sigma}\big|=\big|\frac{\mathsf{d \omega_\pm}}{\dif \sigma}\big|,
\end{equation}
which together with \eqref{eq:RT1} implies the second formula in \eqref{eq:RGHH} by taking $\displaystyle\mathcal{T}_7=\mathcal{T}_5+\sqrt{\gamma}\frac{\mathcal{T}_4}{\varrho^{\frac{3-\gamma}{2}}}=\mathcal{O}_+(1).$

 Noting \eqref{eq:RT1}, a direct computation shows that for $\rho_\infty$ close to zero,
\begin{equation}\label{eq:RiemanInvarianceSelf}
    \begin{aligned}
        \big|\omega_{\pm}(s_0)-\omega_{\pm}(\sigma)\big|\leq\int_\sigma^{s_0}\big|\frac{\dif\omega_\pm(\eta)}{\dif\eta}\big|\dif\eta\leq \mathcal{T}_7\big|s_0-b_0\big|,
    \end{aligned}
    \end{equation}
    which together with \eqref{eq:Asympototicrelation} and $b_0=\vartheta(b_0)$, implies the first formula in \eqref{eq:RGHH} by taking $\displaystyle\mathcal{T}_6=\mathcal{T}_1\mathcal{T}_7=\mathcal{O}_+(\rho_{\infty}^{\frac{\gamma-1}{\gamma}}).$
  The proof is complete. 
\end{proof}

\section{Asymptotic expansion of the solution near the shock}
     For given $\displaystyle s'(t),$ we can get $(\rho,v)\big|_{\mathsf{S}}$ from \eqref{EE4}, and we in this section figure out their order about $\rho_{\infty}$ for $\rho_{\infty}$ close to zero.
     
     For $\rho>0,$ system \eqref{EE1} is strictly hyperbolic system with two distinct eigenvalues $\lambda_{\pm}$ given by
     \begin{equation}\label{DefofLam}
     \lambda_{\pm}=v\pm c,
     \end{equation}
     where $c$ is the sound speed given by 
     \begin{equation}\label{eq:DefofSonicspeed}
     c=\sqrt{\frac{\dif \rho^\gamma}{\dif\rho}}=\sqrt{\gamma \rho^{\gamma-1}}.
     \end{equation}
    Let $w_{\pm}$ be Riemann invariants as
     \begin{equation}\label{Defofw}
     w_{\pm}=v\pm\frac{2}{\gamma-1}c.
     \end{equation}
     Direct computation shows that for $\rho>0,$ there holds the following,
     \begin{equation}\label{Rep}
     \left\{
     \begin{aligned}
     &v=\frac{w_{-}+w_{+}}{2},\\
     &\rho=(\frac{(\gamma-1)^2(w_{+}-w_{-})^2}{16\gamma })^{\frac{1}{\gamma-1}},\\
     &\lambda_{+}=\frac{\gamma+1}{4}w_{+}+\frac{3-\gamma}{4}w_{-},\\
     &\lambda_{-}=\frac{3-\gamma}{4}w_{+}+\frac{\gamma+1}{4}w_{-}.
     \end{aligned}
     \right.
     \end{equation}
     That is, for $\rho>0,$ $\rho,v$ and $\lambda_{\pm}$ are also functions of $w_{\pm}$.
       
       We have obtained for constant shock speed case the solvability of Rankine-Hugoniot condition and entropy condition and established the asymptotic expansion of solution on shock in Section 2. We can deduce the solvability result and the asymptotic expansion in the same way for the non-constant shock speed case as follows.   
       
       \begin{lemma}\label{lem1}
       Suppose  (A1)  holds. Then, for $\rho_{\infty}$ close enough to zero,  there exists unique $(\rho_s,v_s)$ solving \eqref{EE4}. Furthermore, there hold that
      \begin{equation}\label{eq:rhovEstiamtesS}
       \rho_s=\mathcal{O}_{+}(\rho^{\frac{1}{\gamma}}_{\infty}),~~ s'(t)-v_s=\mathcal{O}_{+}(\rho^{\frac{\gamma-1}{\gamma}}_{\infty}),
       \end{equation}
       and 
       \begin{equation}\label{EE52}
       \left\{
       \begin{aligned}
       &c_s=\mathcal{O}_{+}(\rho^{\frac{\gamma-1}{2\gamma}}_{\infty}),\\
       &s^{\prime}(t)-\lambda_{-s}=\mathcal{O}_{+}(\rho^{\frac{\gamma-1}{2\gamma}}_{\infty}),\\&\lambda_{+s}-s^{\prime}(t)=\mathcal{O}_{+}(\rho^{\frac{\gamma-1}{2\gamma}}_{\infty}).
       \end{aligned}
       \right.
       \end{equation}
       Moreover, if 
       \begin{equation}\label{Pertrubation1}
       \big|(w_{+},w_{-})(r,t)-(w_{+s},w_{-s})\big|\leq \mathcal{T}_8=\mathcal{O}_{+}(\rho^{\beta}_{\infty}), ~\beta>\frac{\gamma-1}{2\gamma},
       \end{equation}
        then there hold that 
        \begin{equation}\label{eq:pertubaedrhovEstiamtesS}
            \rho(r,t) = \mathcal{O}_+(\rho_\infty^\frac{1}{\gamma}),~s'(t)-v(r,t)=\mathcal{O}_{+}(\rho^{\frac{\gamma-1}{\gamma}}_{\infty}), 
        \end{equation}
        and 
       \begin{equation}\label{E12}
       \left\{
       \begin{aligned}
       &\lambda_{+}(r,t)-s^{\prime}(t)=\mathcal{O}_{+}(\rho^{\frac{\gamma-1}{2\gamma}}_{\infty}),\\
       &s^{\prime}(t)-\lambda_{-}(r,t)=\mathcal{O}_{+}(\rho^{\frac{\gamma-1}{2\gamma}}_{\infty}).
       \end{aligned}
       \right.
       \end{equation} 
        Here and in the sequel, we set 
       \begin{equation}
       T_s=T(s(t),t)
       \end{equation}
       with $T\in\{\rho,v,p,c,\lambda_{\pm},w_{\pm}\}.$
       \end{lemma}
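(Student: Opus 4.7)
The plan is to transfer the constant--shock--speed results of Section 2 to the variable shock speed case by applying them pointwise in $t$. At each fixed $t$, the algebraic system \eqref{EE4} has exactly the same form as the last three equations of \eqref{eq:Constantmovingspeedcase} with $s_0$ replaced by $s'(t)$; under assumption (A1), $s'(t)$ ranges in the compact interval $(\kappa_1,\kappa_2)$ uniformly in $t$. Thus existence and uniqueness of $(\rho_s,v_s)$ is immediate from Lemma \ref{lem:RHC} applied at each $t$. The constants $\mathsf{m},\mathsf{M}$ appearing in the $\mathcal{O}_+$ notation in the proof of Theorem \ref{thm:VarianceofSelfsimilarSolution} depend only on $\kappa_1,\kappa_2,\gamma$, so the estimates transfer verbatim: repeating \eqref{eq:CRHC1}--\eqref{eq:E8} with $s'(t)$ in place of $s_0$ and the ratio $k=\rho_s/\rho_\infty$ yields \eqref{eq:rhovEstiamtesS}.

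Next, the estimate for $c_s$ follows from the definition \eqref{eq:DefofSonicspeed}: $c_s=\sqrt{\gamma}\rho_s^{(\gamma-1)/2}=\mathcal{O}_+(\rho_\infty^{(\gamma-1)/(2\gamma)})$. For the characteristic speeds I would write
\[
\lambda_{\pm s}-s'(t) = \pm c_s-(s'(t)-v_s),
\]
and observe that since $(\gamma-1)/(2\gamma)<(\gamma-1)/\gamma$, for $\rho_\infty$ sufficiently small the term $c_s$ strictly dominates $s'(t)-v_s$ in absolute value; hence both quantities in the second and third lines of \eqref{EE52} are positive and of order $\mathcal{O}_+(\rho_\infty^{(\gamma-1)/(2\gamma)})$.

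For the perturbed estimates, I would use the representation \eqref{Rep} to express $\rho,v,\lambda_\pm$ as explicit smooth functions of $(w_+,w_-)$. The key observation is that the gap
\[
w_{+s}-w_{-s}=\tfrac{4}{\gamma-1}c_s=\mathcal{O}_+(\rho_\infty^{(\gamma-1)/(2\gamma)})
\]
dominates the perturbation \eqref{Pertrubation1}, because $\beta>(\gamma-1)/(2\gamma)$ guarantees $\mathcal{T}_8=\mathcal{O}_+(\rho_\infty^\beta)$ is asymptotically smaller than $w_{+s}-w_{-s}$ for $\rho_\infty$ small. Consequently $w_+-w_-$ retains the same order $\mathcal{O}_+(\rho_\infty^{(\gamma-1)/(2\gamma)})$ at $(r,t)$. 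Substituting into the second line of \eqref{Rep} gives $\rho(r,t)=\mathcal{O}_+(\rho_\infty^{1/\gamma})$, so $c(r,t)=\mathcal{O}_+(\rho_\infty^{(\gamma-1)/(2\gamma)})$, and \eqref{E12} then follows from the same dominance argument applied to $\lambda_\pm-s'(t)=(\lambda_\pm-\lambda_{\pm s})+(\lambda_{\pm s}-s'(t))$.

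The main obstacle I anticipate is the $s'(t)-v(r,t)$ bound in \eqref{eq:pertubaedrhovEstiamtesS}: since $s'(t)-v_s$ is only of order $\rho_\infty^{(\gamma-1)/\gamma}$, and the perturbation on $v=(w_++w_-)/2$ is of order $\rho_\infty^\beta$ with $\beta$ possibly smaller than $(\gamma-1)/\gamma$, one cannot directly preserve both the sign and the precise order by the triangle inequality alone. The resolution is to exploit that the constants $\mathsf{M}^{-1}$ and $\mathsf{M}$ in the $\mathcal{O}_+$ characterization of $s'(t)-v_s$ are fixed, while the multiplicative constant in $\mathcal{T}_8$ can be absorbed by choosing $\rho_\infty$ further smaller so that $\mathcal{T}_8$ is a sufficiently small fraction of $s'(t)-v_s$; alternatively, one uses that the a priori assumption \eqref{Pertrubation1} will in the applications of the lemma come with a constant tunable by an independent small parameter, so the conclusion is read with the $\mathcal{O}_+$ constants depending on that parameter.
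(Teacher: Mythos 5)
Your treatment of existence and uniqueness, of \eqref{eq:rhovEstiamtesS}, \eqref{EE52}, the perturbed density bound and \eqref{E12} follows essentially the same route as the paper: apply Lemma \ref{lem:RHC} and the computations \eqref{eq:CRHC1}--\eqref{eq:E8}, \eqref{eq:E6} pointwise in $t$ (the $\mathcal{O}_+$ constants depend only on $\kappa_1,\kappa_2,\gamma$), then decompose $\lambda_\pm-s'(t)$ and $w_+-w_-$ into the shock trace plus a perturbation and use that the perturbation $\mathcal{O}_+(\rho_\infty^{\beta})$, $\beta>\frac{\gamma-1}{2\gamma}$, is of higher order than the gap $w_{+s}-w_{-s}=\frac{4}{\gamma-1}c_s=\mathcal{O}_+(\rho_\infty^{\frac{\gamma-1}{2\gamma}})$. (The paper computes $\big|\rho-\rho_s\big|$ via a Lagrange mean value expansion rather than your direct substitution into \eqref{Rep}, but the two are equivalent.)

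The genuine problem is exactly the one you flag, namely the second formula in \eqref{eq:pertubaedrhovEstiamtesS}, and neither of your two proposed resolutions works. Since $v=\frac{w_++w_-}{2}$, the hypothesis \eqref{Pertrubation1} only gives $\big|v(r,t)-v_s\big|\leq\mathcal{T}_8=\mathcal{O}_+(\rho_\infty^{\beta})$, while $s'(t)-v_s=\mathcal{O}_+(\rho_\infty^{\frac{\gamma-1}{\gamma}})$. When $\frac{\gamma-1}{2\gamma}<\beta<\frac{\gamma-1}{\gamma}$ --- which is the regime actually used later, e.g. $\mathcal{R}_3=\mathcal{O}_+(\rho_\infty^{\frac{\gamma-1}{2\gamma}+\frac{\varpi_1}{3}})$ with $\varpi_1\leq\frac{\gamma-1}{4\gamma}$ --- the ratio $\mathcal{T}_8/(s'(t)-v_s)$ behaves like $\rho_\infty^{\beta-\frac{\gamma-1}{\gamma}}\to+\infty$ as $\rho_\infty\to0$. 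Shrinking $\rho_\infty$ therefore makes the perturbation relatively larger, not smaller; no choice of multiplicative constants can repair an exponent mismatch. Moreover, Definition \ref{def:defofO} fixes $\mathsf{m},\mathsf{M}$ independent of $(r,t)$ and $\rho_\infty$, so ``tuning the constant by an independent parameter'' is not available either. To obtain the two-sided bound (in particular the positivity $v(r,t)<s'(t)$, on which the geometry $b(t)<s(t)$ in Lemma \ref{lem3} relies) one needs more than \eqref{Pertrubation1}: in the self-similar case this came from the monotonicity $\vartheta_\sigma<0$ of Lemma \ref{lem:Monotonic}, and a correct statement here should either require $\beta>\frac{\gamma-1}{\gamma}$ for this particular conclusion or weaken it to $\big|s'(t)-v(r,t)-\mathcal{O}_+(\rho_\infty^{\frac{\gamma-1}{\gamma}})\big|\leq\mathcal{T}_8$. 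For what it is worth, the paper's own proof offers nothing beyond ``follows in the same way'' at this point, so your instinct that this is the weak link is sound even though your fix is not.
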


       \begin{proof}
          The solvability of \eqref{EE4} follows directly as a corollary of Lemma \ref{lem:RHC}. The asymptotic expansions in \eqref{eq:rhovEstiamtesS} are immediate corollaries of \eqref{eq:E8} and \eqref{eq:E6} established in the proof of Theorem \ref{thm:VarianceofSelfsimilarSolution}. Furthermore, substituting \eqref{eq:rhovEstiamtesS} into the explicit expressions for $\lambda_\pm$ and $c$ given in \eqref{DefofLam} and \eqref{eq:DefofSonicspeed} directly yields \eqref{EE52}. Finally, we proceed to prove \eqref{eq:pertubaedrhovEstiamtesS} and \eqref{E12}.

           Referring to \eqref{Rep},  a direct computation shows that 
           \begin{equation}
               \begin{aligned}
                   &\lambda_+(r,t)-s'(t)=\lambda_{+}(r,t)-\lambda_{+s}+\lambda_{+s}-s'(t)\\&=\frac{\gamma+1}{4}(w_+-w_{+s})+\frac{3-\gamma}{4}(w_--w_{-s})+\lambda_{+s}-s'(t),
               \end{aligned}
           \end{equation}
     which, together with \eqref{EE52} and \eqref{Pertrubation1}, implies the first formula in \eqref{E12}. The second formula in \eqref{eq:pertubaedrhovEstiamtesS} and the third formula in \eqref{E12} follow in the same way. It remains to prove the first formula in \eqref{eq:pertubaedrhovEstiamtesS}.
     
     According to the second formula in \eqref{Rep}, a direct computation shows that 
     \begin{equation}\label{E13}
     	\begin{aligned}
     	\big|\rho-\rho_s\big|&=(\frac{\gamma-1}{4\sqrt{\gamma}})^{\frac{2}{\gamma-1}}\big|(w_{+}-w_{-})^{\frac{2}{\gamma-1}}-(w_{+s}-w_{-s})^{\frac{2}{\gamma-1}}\big|\\&=(\frac{\gamma-1}{4\sqrt{\gamma}})^{\frac{2}{\gamma-1}}(w_{+s}-w_{-s})^{\frac{2}{\gamma-1}}\big|(1+\frac{w_{+}-w_{-}-w_{+s}+w_{-s}}{w_{+s}-w_{-s}})^{\frac{2}{\gamma-1}}-1\big|.
     	\end{aligned}
     	\end{equation}
     By Lagrange mean value theorem, it holds that
     	\begin{equation}\label{eq:Lagrangemeanvalueresults}
     	(1+\frac{w_{+}-w_{-}-w_{+s}+w_{-s}}{w_{+s}-w_{-s}})^{\frac{2}{\gamma-1}}-1=\frac{2}{\gamma-1}(1+\xi_0)^{\frac{3-\gamma}{\gamma-1}}~\frac{w_{+}-w_{-}-w_{+s}+w_{-s}}{w_{+s}-w_{-s}}
     	\end{equation}
     	 for some $\xi_0$ with $\displaystyle\big|\xi_0\big|\in(0,\big|\frac{w_{+}-w_{-}-w_{+s}+w_{-s}}{w_{+s}-w_{-s}}\big|).$ 
         Thus, plugging \eqref{eq:Lagrangemeanvalueresults} into \eqref{E13} leads to 
     	 \begin{equation}\label{06191}
     	 \big|\rho-\rho_s\big|=\frac{2}{\gamma-1}(\frac{\gamma-1}{4\sqrt{\gamma}})^{\frac{2}{\gamma-1}}(1+\xi_0)^{\frac{3-\gamma}{\gamma-1}}(w_{+s}-w_{-s})^{\frac{2}{\gamma-1}}\big|\frac{w_{+}-w_{-}-w_{+s}+w_{-s}}{w_{+s}-w_{-s}}\big|.
     	 \end{equation}
     	
       Note $\gamma\in(1,3)$ and
      \begin{equation}\label{eq:RecallGamma}
      \begin{aligned}
      &w_{+s}-w_{-s}=\frac{4}{\gamma-1}c_s=\mathcal{O}_{+}(\rho^{\frac{\gamma-1}{2\gamma}}_{\infty}),\\
      &\big|w_{+}-w_{-}-w_{+s}+w_{-s}\big|\leq \mathcal{T}_8.
      \end{aligned}
      \end{equation}
      Substituting \eqref{eq:RecallGamma} into \eqref{06191} yields that for sufficiently small positive $\rho_{\infty},$
      \begin{equation}
      \big|\rho-\rho_s\big|\leq  
      \gamma^{\frac{1}{1-\gamma}}2^{\frac{4-2\gamma}{\gamma-1}}c_s^{\frac{3-\gamma}{\gamma-1}} \mathcal{T}_8 =\mathcal{O}_{+}(\rho_{\infty}^{\beta+\frac{3-\gamma}{2\gamma}}),
      \end{equation}
      which together with \eqref{eq:rhovEstiamtesS} and $\beta>\frac{\gamma-1}{2\gamma}$, implies the forth formula in \eqref{eq:pertubaedrhovEstiamtesS}. 
      The proof is complete.	
\end{proof}

      So far, as shown in Lemma \ref{lem1}, we complete the zeroth-order estimates of the Cauchy data on the given shock $\mathsf{S}$. We next estimate its derivatives on $\mathsf{S}$.
      
     \begin{lemma}\label{lem4}
     	Suppose that (A1) and (A3) holds. Then, for $\rho_\infty$ close to zero, there exists $\displaystyle\mathcal{T}_9=\mathcal{O}_+(\rho_\infty^{\varpi_*})$ with $\displaystyle \varpi_*=\min\{0, \frac{1-\gamma}{2\gamma}+\varpi_0\}$, such that 
        \begin{equation}\label{EE53}
          \big|t\partial_rw_{\pm }\big|_\mathsf{S}\big|\leq \mathcal{T}_9.
      \end{equation}
     \end{lemma}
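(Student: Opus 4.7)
The plan is to express $\partial_r w_\pm|_\mathsf{S}$ in terms of quantities depending only on $s(t), s'(t), s''(t)$ and $\rho_\infty$, and then combine the zeroth-order estimates from Lemma \ref{lem1} with the hypothesis (A3). First I would rewrite \eqref{EE1} in Riemann-invariant form: a direct computation using \eqref{DefofLam}, \eqref{Defofw} and the identity $\partial_\rho(2c/(\gamma-1)) = c/\rho$ yields the characteristic equations
\begin{equation*}
\partial_t w_\pm + \lambda_\pm \partial_r w_\pm = \mp \frac{2cv}{r}.
\end{equation*}
Comparing these with the tangential derivative $\frac{d}{dt}w_{\pm s} = (\partial_t w_\pm + s'(t)\partial_r w_\pm)|_{\mathsf{S}}$ and solving for $\partial_r w_\pm|_\mathsf{S}$ gives the key identity
\begin{equation*}
\partial_r w_\pm\big|_\mathsf{S} = \frac{1}{s'(t)-\lambda_\pm|_\mathsf{S}}\Big(\frac{d w_{\pm s}}{dt} \pm \frac{2 c_s v_s}{s(t)}\Big).
\end{equation*}
By Lemma \ref{lem1} the denominator has size $\mathcal{O}_+(\rho_\infty^{(\gamma-1)/(2\gamma)})$, and (A1) together with $s(0)=0$ gives $t/s(t) = \mathcal{O}_+(1)$; since $c_s v_s = \mathcal{O}_+(\rho_\infty^{(\gamma-1)/(2\gamma)})$ by \eqref{eq:rhovEstiamtesS}--\eqref{EE52}, the geometric-source piece contributes $\mathcal{O}_+(1)$ to $t\partial_r w_\pm|_\mathsf{S}$.

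The bulk of the estimate is controlling $t\,dw_{\pm s}/dt$. The Rankine--Hugoniot identity (the analogue of \eqref{eq:CRHC1} with $s_0$ replaced by $s'(t)$) determines $\rho_s$ implicitly through $(s'(t))^2 = \rho_\infty^{\gamma-1}\,k(k^\gamma-1)/(k-1)$ with $k = \rho_s/\rho_\infty$, and I would differentiate this in $t$. Using the sharp scalings $k = \mathcal{O}_+(\rho_\infty^{(1-\gamma)/\gamma})$ and $\rho_s = \mathcal{O}_+(\rho_\infty^{1/\gamma})$ from the proof of Theorem \ref{thm:VarianceofSelfsimilarSolution}, the prefactor satisfies $F'(k)\rho_\infty^{\gamma-2} = \mathcal{O}_+(\rho_\infty^{-1/\gamma})$ after a cancellation of exponents, hence $d\rho_s/dt = s'(t)s''(t)\,\mathcal{O}_+(\rho_\infty^{1/\gamma})$. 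Substituting into $v_s = (1-\rho_\infty/\rho_s)s'(t)$ and $c_s = \sqrt{\gamma}\rho_s^{(\gamma-1)/2}$ and invoking (A3) should yield $|t\,dv_s/dt|\leq\mathcal{O}(\rho_\infty^{\varpi_0})$ (driven by the bare $s''(t)$ piece) and $|t\,dc_s/dt|\leq\mathcal{O}(\rho_\infty^{(\gamma-1)/(2\gamma)+\varpi_0})$, which combine to $|t\,dw_{\pm s}/dt|\leq\mathcal{O}(\rho_\infty^{\varpi_0})$.

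Dividing the resulting numerator bound $\mathcal{O}(\rho_\infty^{\varpi_0}) + \mathcal{O}_+(\rho_\infty^{(\gamma-1)/(2\gamma)})$ by the denominator of order $\rho_\infty^{(\gamma-1)/(2\gamma)}$ and adding the $\mathcal{O}_+(1)$ contribution from the geometric source gives
\begin{equation*}
|t\partial_r w_\pm|_\mathsf{S}| \leq \mathcal{O}(\rho_\infty^{\varpi_0+(1-\gamma)/(2\gamma)}) + \mathcal{O}_+(1) = \mathcal{O}_+(\rho_\infty^{\varpi_*}),
\end{equation*}
with $\varpi_* = \min\{0,\varpi_0+(1-\gamma)/(2\gamma)\}$, which is the desired bound $\mathcal{T}_9$. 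The main obstacle I anticipate is the implicit differentiation step: one must track precisely the cancellation that produces $F'(k)\rho_\infty^{\gamma-2} = \mathcal{O}_+(\rho_\infty^{-1/\gamma})$ instead of settling for a naive bound, and then verify that the potentially singular factor $c_s/\rho_s = \mathcal{O}_+(\rho_\infty^{-(3-\gamma)/(2\gamma)})$ appearing inside $dc_s/dt$ is tamed by the $\rho_\infty^{1/\gamma}$ gain from $d\rho_s/dt$, so that the $dc_s/dt$ contribution remains strictly subdominant to the $s''(t)$-driven piece of $dv_s/dt$.
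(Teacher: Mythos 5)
Your proposal is correct and follows essentially the same route as the paper: solve for $\partial_r w_\pm\big|_{\mathsf{S}}$ by combining the characteristic form of the equations with the tangential derivative $\frac{\dif w_{\pm s}}{\dif t}$ along $\mathsf{S}$, bound $\frac{\dif w_{\pm s}}{\dif t}$ by implicitly differentiating the Rankine--Hugoniot relation (your $k$-variable cancellation giving $\frac{\dif\rho_s}{\dif t}=\mathcal{O}_+(\rho_\infty^{1/\gamma})\,|s''|$ matches the paper's direct computation in $\rho_s$), and divide by $s'(t)-\lambda_{\pm s}=\mathcal{O}_+(\rho_\infty^{(\gamma-1)/(2\gamma)})$ before invoking (A1) and (A3). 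The only cosmetic difference is the sign of the geometric source term (you write $\mp 2cv/r$ while \eqref{E10} carries the opposite sign), which is immaterial here since only absolute values enter the estimate.
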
 
     
     \begin{proof}
     We divide the proof into two parts. The first one is to calculate $\displaystyle\frac{dw_{\pm}(s(t),t)}{dt}.$ To this end, referring to \eqref{eq:E1}, we derive from the Rankine-Hugoniot condition \eqref{EE4} that 
    \begin{equation}\label{eq:RF4}
     	\left\{
     	\begin{aligned}
     	&v_s=\sqrt{\frac{(\rho_s-\rho_{\infty})(\rho_s^{\gamma}-\rho^{\gamma}_{\infty})}{\rho_{\infty}\rho_s}},\\
     	&v_s=\frac{(\rho_s-\rho_{\infty})}{\rho_s}s'(t),\\
        &\rho_s>\rho_\infty,
     	\end{aligned}
     	\right.
     	\end{equation}
which gives that
     \begin{equation}\label{EE5}
         (s'(t))^2=\frac{\rho_s(\rho_s^\gamma-\rho_\infty^\gamma)}{(\rho_s-\rho_\infty)\rho_\infty}.
     \end{equation}
     Differentiating \eqref{EE5} with respect to $t$, we get that 
     	\begin{equation}\label{E14}
     	\begin{aligned}
     	2s^{\prime}(t)s''(t)=\frac{\rho_s^\gamma-\rho_\infty^\gamma}{\rho_s-\rho_{\infty}}\frac{1}{\rho_{\infty}}\frac{\dif\rho_s}{\dif t}+\frac{\rho_s}{\rho_{\infty}}\frac{\gamma\rho_s^{\gamma-1}}{\rho_s-\rho_{\infty}}\frac{\dif\rho_s}{\dif t}-\frac{\rho_s}{\rho_{\infty}}\frac{\rho_s^\gamma-\rho_\infty^\gamma}{(\rho_s-\rho_{\infty})^2}\frac{\dif\rho_s}{\dif t}.
     	\end{aligned}
     	\end{equation}
        Note that 
        \begin{equation}\label{Ad}
     	\frac{\rho_s^\gamma-\rho_\infty^\gamma}{\rho_s-\rho_{\infty}}\frac{1}{\rho_{\infty}}-\frac{\rho_s}{\rho_{\infty}}\frac{\rho_s^\gamma-\rho_\infty^\gamma}{(\rho_s-\rho_{\infty})^2}=-\frac{\rho_s^\gamma-\rho_\infty^\gamma}{\rho_s-\rho_{\infty}}\ \frac{1}{\rho_s-\rho_{\infty}}.
     	\end{equation}  
        Given $(A1)$ and the fact $\displaystyle\rho_s=\mathcal{O}_{+}(\rho_{\infty}^{\frac{1}{\gamma}})$ shown in \eqref{eq:rhovEstiamtesS}, substituting  \eqref{Ad} into \eqref{E14} yields 
        \begin{equation}\label{E15}
     	\big|\frac{\dif\rho_s}{\dif t}\big|=\mathcal{T}_{10}\big|s''(t)\big|,
     	\end{equation}  
      for some $\displaystyle\mathcal{T}_{10}=\mathcal{O}_+(\rho_\infty^{\frac{1}{\gamma}}).$  In addition, differentiating the second equation in \eqref{eq:RF4} with respect to $t$ and substituting \eqref{E15} lead to 
     	\begin{equation}\label{E16}
     	\big|\frac{\dif v_s}{\dif t}\big|=\big|s''(t)-\frac{\rho_{\infty}}{\rho_s}s''(t)+s^{\prime}(t)\frac{\rho_{\infty}}{\rho^2_s}\frac{\dif\rho_s}{\dif t}\big|=\mathcal{T}_{11}\big|s''(t)\big|,
     	\end{equation}
        for some $\mathcal{T}_{11}=\mathcal{O}_+(1).$ 
        
        Thus, combining  \eqref{E15} and \eqref{E16} yields that  
     	\begin{equation}\label{E17}
     	\big|\frac{\dif w_{\pm s}}{\dif t}\big|=\big|\frac{\dif v_s}{\dif t}\pm\frac{2}{\gamma-1}\rho_s^{\frac{\gamma-3}{2}}\frac{\dif \rho_s}{\dif t}\big|=\mathcal{T}_{12}\big|s''(t)\big|,
     	\end{equation}
        for some $\mathcal{T}_{12}=\mathcal{O}_+(1).$

        Note 
     	\begin{equation}\label{E18}
     	\begin{aligned}
     	    \frac{\dif w_{\pm s}}{\dif t}=\partial_tw_{\pm}\big|_{\mathsf{S}}+s^{\prime}(t)\partial_rw_{\pm}\big|_{\mathsf{S}},
     	\end{aligned}
     	\end{equation}
        which, combined with \eqref{E10}, yields that 
        \begin{equation}\label{E19}
          \partial_rw_{\pm}\big|_{\mathsf{S}}=\qnt{\frac{\dif  w_{\pm s}}{\dif t}\mp \frac{\gamma-1}{4}\frac{w^2_{+s}-w^2_{-s}}{s(t)}}\frac{1}{s'(t)-\lambda_{\pm s}}.
        \end{equation}
     Then, recalling the definition of $w_\pm$ and substituting  \eqref{eq:rhovEstiamtesS}, \eqref{EE52}, \eqref{E17} into \eqref{E19} yields that 
     \begin{equation}\label{AA1}
         \big|\partial_rw_{\pm}\big|_{\mathsf{S}}\big|\leq \mathcal{T}_{13}\big|s''(t)\big|+\mathcal{T}_{14} t^{-1}
     \end{equation}
      for some $\mathcal{T}_{13}=\mathcal{O}_+(\rho_\infty^{\frac{1-\gamma}{2\gamma}}), \mathcal{T}_{14}=\mathcal{O}_+(1).$ 

      Finally, due to (A3), we derive from \eqref{AA1} that there exists $\mathcal{T}_9=\mathcal{O}_+(\rho_\infty^{\varpi_*})$ with  $\displaystyle \varpi_*=\min\{0, \frac{1-\gamma}{2\gamma}+\varpi_0\}$ such that for $\rho_\infty$ close to zero, 
      \begin{equation}
          \big|t\partial_rw_{\pm}\big|_{\mathsf{S}}\big|\leq \mathcal{T}_9.
      \end{equation}
The proof is complete.
\end{proof}
     
     \section{proof of Theorem \ref{thm1}}
     
     In the section, we analyze the states in the region between the shock and the piston along characteristic lines issuing from the given shock $\mathsf{S}$. To this end, system \eqref{EE1} can be reduced to the following equivalent form for $\rho>0,$
     \begin{equation}\label{E10}
     \left\{
     \begin{aligned}
     &\partial_t(w_{+})+\lambda_{+}\partial_r(w_{+})=\frac{\gamma-1}{4r}(w^2_{+}-w^2_{-}),\\&
     \partial_t(w_{-})+\lambda_{-}\partial_r(w_{-})=-\frac{\gamma-1}{4r}(w^2_{+}-w^2_{-}),
     \end{aligned}
     \right.
     \end{equation}
     where Riemann invariants $w_{\pm}$ are defined in \eqref{Defofw} and the eigenvalues $\lambda_{\pm}$ are defined in \eqref{DefofLam}. Furthermore, setting the notations $$w_{\pm,r}=\partial_rw_{\pm},$$ differentiating \eqref{E10} with respect to $r$ yields 
     \begin{equation}\label{E11}
     \left\{
     \begin{aligned}
     &\partial_t(w_{+,r})+\lambda_{+}\partial_r(w_{+,r})+\frac{\gamma+1}{4}w^2_{+,r}+\frac{3-\gamma}{4}w_{+,r}w_{-,r}\\&=-\frac{\gamma-1}{4r^2}(w^2_{+}-w^2_{-})+\frac{\gamma-1}{2r}(w_{+}w_{+,r}-w_{-}w_{-,r}),\\
     &\partial_t(w_{-,r})+\lambda_{-}\partial_r(w_{-,r})+\frac{\gamma+1}{4}w^2_{-,r}+\frac{3-\gamma}{4}w_{+,r}w_{-,r}\\&=\frac{\gamma-1}{4r^2}(w^2_{+}-w^2_{-})-\frac{\gamma-1}{2r}(w_{+}w_{+,r}-w_{-}w_{-,r}).
     \end{aligned}
     \right.
     \end{equation}

      For fixed positive constant $\mathsf{T},$ we define 
     \begin{equation}\label{338}
     \Omega_{\mathsf{T}}:=\{(r,t): s(t;\mathsf{T})<r<s(t),t>0\},
     \end{equation}
     where
     \begin{equation}\label{339}
     s(t;\mathsf{T}):=b(t)I(t;\mathsf{T})+\{s(t)+b(\mathsf{T})-s(\mathsf{T})\}\{1-I(t;\mathsf{T})\},
     \end{equation}
     \begin{equation}
     I(t;\mathsf{T})=\left\{
     \begin{aligned}
     &1, t\in[0,\mathsf{T}),\\
     &0, t\in [\mathsf{T},+\infty).
     \end{aligned}
     \right.
     \end{equation} 
    
     \begin{figure}
     	\centering
     	\includegraphics[width=0.7\linewidth]{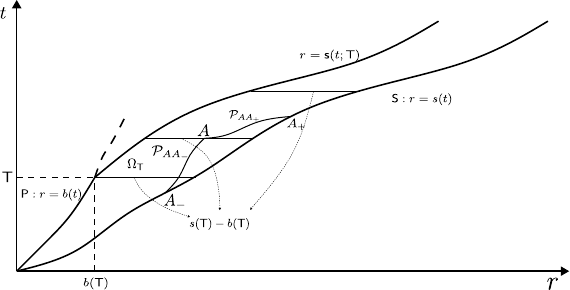}
     	\caption{ $\Omega_{\mathsf{T}}$ and characteristic curves therein }
     	\label{fig:piston-problem-characteristic-curve}
     \end{figure}

     We next study the behavior of characteristics in the stripe region  $\Omega_{\mathsf{T}}$, which significantly contributes to control the variation of $w_{\pm,r}$ when integrating \eqref{E11} to complete the $C^1$ a priori estimates shown in Lemma \ref{lem5}. 
     
     \begin{lemma}\label{lem3}
     	For $\rho_\infty$ close to zero, suppose that (A1) holds and that $(\rho,v)\in C^1(\Omega_{ \mathsf{T}})$ and $ b(t)\in C^2(0,\mathsf{T})$ satisfy \eqref{EE1}, \eqref{EE3} and \eqref{EE4} with 
     	\begin{equation}\label{EE41}
     	\big|w_{\pm}(r,t)-w_{\pm}(s(t),t)\big|\leq \mathcal{R}_1,
     	\end{equation}
     	for some $\displaystyle \mathcal{R}_1=\mathcal{O}_{+}(\rho^{\beta}_{\infty}), \beta>\frac{\gamma-1}{2\gamma},$ then there exists $\displaystyle\mathcal{R}_2=\mathcal{O}_{+}(\rho_{\infty}^{\frac{\gamma-1}{2\gamma}})$ such that when $\rho_\infty$ sufficiently close to zero, for any point $A(r,t)\in\Omega_{\mathsf{T}},$ 
        \begin{equation}\label{EE46}
     	0<t-t_{-}\leq \mathcal{R}_2 t,\quad 0<t_{+}-t\leq \mathcal{R}_2 t.
     	\end{equation}   
        Here and in the sequel, we denote $\mathcal{P}_{AA_{+}}$ as the forward $\lambda_+-$ characteristics  originating from $A$ and terminating at  $A_+(s(t_{+}),t_{+})\in S;$ and denote $\mathcal{P}_{AA_{-}}$ as the backward $\lambda_--$ characteristics  originating from $A$ and terminating at  $A_-(s(t_{-}),t_{-})\in S;$ see Figure \ref{fig:piston-problem-characteristic-curve}.
     \end{lemma}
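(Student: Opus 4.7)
\emph{Proof proposal.} The plan is to divide the spatial gap $s(t)-r$ by the speed gap between the characteristic and the shock. By Lemma~\ref{lem1}, under \eqref{EE41} both $s'(\tau)-\lambda_-$ and $\lambda_+-s'(\tau)$ are pinched between positive multiples of $\rho_\infty^{(\gamma-1)/(2\gamma)}$ throughout $\Omega_\mathsf{T}$, while a mass-conservation identity will bound the strip width $s(\tau)-b(\tau)$ by $\mathcal{O}_+(\rho_\infty^{(\gamma-1)/\gamma})\tau$. Dividing gives the claimed $\mathcal{R}_2=\mathcal{O}_+(\rho_\infty^{(\gamma-1)/(2\gamma)})$.

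For the backward characteristic, parametrize $\mathcal{P}_{AA_-}$ as $r_-(\tau)$, $\tau\in[t_-,t]$, with $r_-(t)=r$ and $r_-(t_-)=s(t_-)$. From $\tfrac{d}{d\tau}(s(\tau)-r_-(\tau))=s'(\tau)-\lambda_-(r_-(\tau),\tau)$ I integrate to obtain
\begin{equation*}
s(t)-r=\int_{t_-}^{t}\bigl(s'(\tau)-\lambda_-(r_-(\tau),\tau)\bigr)\,d\tau,
\end{equation*}
and \eqref{E12} pins the integrand between positive multiples of $\rho_\infty^{(\gamma-1)/(2\gamma)}$, yielding $t-t_-\leq M\rho_\infty^{-(\gamma-1)/(2\gamma)}(s(t)-r)$. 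Before applying this I must verify that $r_-(\tau)$ stays in $\Omega_\mathsf{T}$: the monotonicity of $s(\tau)-r_-(\tau)$ guarantees $t_-$ exists and is unique, while comparing $\lambda_-=v-c$ with $b'(\tau)=v|_b$ via \eqref{EE41} and \eqref{EE52} shows $b'(\tau)>\lambda_-(r_-(\tau),\tau)$, so $r_-(\tau)-b(\tau)$ is nondecreasing backward and the characteristic remains to the right of the piston for $\tau\leq\mathsf{T}$ (and of the translated shock for $\tau>\mathsf{T}$ by the same monotonicity).

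To bound $s(t)-r$ I control the strip width via mass conservation. Integrating the continuity equation over $\{b(\tau)<|\vec{x}|<s(\tau)\}$ and using the Rankine-Hugoniot conditions \eqref{EE4} on the shock (which yield mass flux $4\pi s(\tau)^2\rho_\infty s'(\tau)$) together with the no-slip condition \eqref{EE3} on the piston (zero flux) gives
\begin{equation*}
\int_{b(\tau)}^{s(\tau)}\rho(r,\tau)\,r^2\,dr=\tfrac{1}{3}\rho_\infty s(\tau)^3.
\end{equation*}
From \eqref{eq:pertubaedrhovEstiamtesS} I have $\rho\geq M^{-1}\rho_\infty^{1/\gamma}$ throughout $\Omega_\mathsf{T}$, and $b(\tau)\geq 0$ gives $s(\tau)^3-b(\tau)^3\geq(s(\tau)-b(\tau))s(\tau)^2$; combined with (A1) this yields $s(\tau)-b(\tau)\leq M\kappa_2\rho_\infty^{(\gamma-1)/\gamma}\tau$. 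Since $A\in\Omega_\mathsf{T}$, $s(t)-r\leq s(\min(t,\mathsf{T}))-b(\min(t,\mathsf{T}))\leq M\kappa_2\rho_\infty^{(\gamma-1)/\gamma}t$. The two steps combine to $t-t_-\leq\mathcal{R}_2 t$ with $\mathcal{R}_2=\mathcal{O}_+(\rho_\infty^{(\gamma-1)/(2\gamma)})$. The forward-characteristic estimate $t_+-t\leq\mathcal{R}_2 t$ follows identically with $\lambda_+-s'$ in place of $s'-\lambda_-$.

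The main obstacle is the strip-width bound: a naive argument applying \eqref{EE41} pointwise to $s'-b'=s'-v|_b$ would yield only $\mathcal{O}_+(\rho_\infty^\beta)\tau$, which is insufficient when $\beta$ is only slightly above $(\gamma-1)/(2\gamma)$. The mass-conservation identity recovers the sharp $(\gamma-1)/\gamma$ exponent by exploiting the uniform lower bound $\rho\geq M^{-1}\rho_\infty^{1/\gamma}$ on $\Omega_\mathsf{T}$ rather than the pointwise deviation bound on $v$.
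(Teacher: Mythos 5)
Your proposal is correct, and its skeleton coincides with the paper's: write $s(t)-r=\int(\lambda_\pm-s')\,\dif\tau$ along the characteristic (the paper's \eqref{E23}--\eqref{E24}), bound $s(t)-r$ by the strip width, and divide by the characteristic--shock speed gap from \eqref{E12}. Where you genuinely diverge is the width bound $s(\tau)-b(\tau)\leq \mathcal{O}_+(\rho_\infty^{\frac{\gamma-1}{\gamma}})\tau$. The paper obtains it by evaluating the second formula of \eqref{eq:pertubaedrhovEstiamtesS} at $r=b(\tau)$, i.e.\ $s'-b'=s'-v|_{r=b(\tau)}=\mathcal{O}_+(\rho_\infty^{\frac{\gamma-1}{\gamma}})$ (its \eqref{EE42}), and integrating; you instead integrate the continuity equation over the strip to get $\int_{b(\tau)}^{s(\tau)}\rho r^2\,\dif r=\tfrac13\rho_\infty s(\tau)^3$ and use the density lower bound $\rho\geq \mathsf{M}^{-1}\rho_\infty^{1/\gamma}$. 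Your route is valid and in fact more robust: since $v=(w_++w_-)/2$, hypothesis \eqref{EE41} only controls $|v-v_s|$ by $\mathcal{O}_+(\rho_\infty^\beta)$ with $\beta>\frac{\gamma-1}{2\gamma}$, which need not be $o(\rho_\infty^{\frac{\gamma-1}{\gamma}})$ (and is not in the application in Lemma \ref{lem5}, where $\beta<\frac{\gamma-1}{\gamma}$), so the two-sided claim \eqref{EE42} is not fully justified by Lemma \ref{lem1} alone; your mass-conservation identity recovers the sharp exponent unconditionally, exactly as you flag in your closing remark. Your explicit check that the characteristics exit $\Omega_{\mathsf{T}}$ only through the shock (comparing $\lambda_\mp$ with the lower boundary's speed) is also a point the paper leaves implicit. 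One cosmetic caveat: for the final inequality you only need the one-sided bounds $s'-\lambda_-\geq \mathsf{M}^{-1}\rho_\infty^{\frac{\gamma-1}{2\gamma}}$ and $s(t)-r\leq \mathsf{M}\rho_\infty^{\frac{\gamma-1}{\gamma}}t$, not the full two-sided ``pinching,'' and both are available to you.
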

     \begin{proof}
        Due to  \eqref{EE41}, \eqref{eq:pertubaedrhovEstiamtesS} and \eqref{E12} in Lemma \ref{lem1} hold. Moreover, \eqref{eq:pertubaedrhovEstiamtesS} together with the fact that $\displaystyle b'(t)=v(b(t),t),$ implies for $t\in(0,\mathsf{T}),$ 
     	\begin{equation}\label{EE42}
     	 s'(t)-b'(t)=\mathcal{O}_{+}(\rho^{\frac{\gamma-1}{\gamma}}_{\infty}).
     	\end{equation}
        Thus, integrating \eqref{EE42} yields that there exists $\displaystyle\mathcal{T}_{15}=\mathcal{O}_+(\rho_\infty^{\frac{\gamma-1}{\gamma}})$ such that
        \begin{equation}\label{EE45}
            0<s(t)-b(t)\leq \mathcal{T}_{15}t.
        \end{equation}

        Recall the definition of $\mathcal{P}_{AA_+}$ and $\mathcal{P}_{AA_-}$ that 
        \begin{equation}\label{defofCharacteristics}
            \begin{aligned}
                &\mathcal{P}_{AA_+}=\{(R_+(\tau),\tau): R'_+(\tau)=\lambda_+(R_+(\tau),\tau), R_+(t)=r, \tau\in(t,t_+)\},\\
                &\mathcal{P}_{AA_-}=\{(R_-(\tau),\tau): R'_-(\tau)=\lambda_-(R_-(\tau),\tau), R_-(t)=r, \tau\in(t_-,t)\},
            \end{aligned}
        \end{equation}
        thus, the definition of the points $A_\pm(s(t_\pm),t_\pm)$ implies that 
     	\begin{equation}\label{E23}
     	\left\{
     	\begin{aligned}
     	&s(t_{\pm})=r+\int_{t}^{t_{\pm}}\lambda_{\pm}(R_{\pm}(\tau),\tau)~\dif \tau,\\
     	&s(t_{\pm})=s(t)+\int_{t}^{t_{\pm}}s^{\prime}(\tau)~\dif \tau,
     	\end{aligned}
     	\right.
     	\end{equation}
     	which, combined with $r\in(b(t),s(t))$ following from $A(r,t)\in \Omega_{ \mathsf{T}},$  yields that 
     	\begin{equation}\label{E24}
     	0<s(t)-r=\int_{t}^{t_{\pm}}\lambda_{\pm}(R_{\pm}(\tau),\tau)-s^{\prime}(\tau)~\dif \tau\leq s(t)-b(t).
     	\end{equation} 
        
     Finally, substituting \eqref{E12} and \eqref{EE45} into the \eqref{E24} yields that there exists $\displaystyle \mathcal{R}_2=\mathcal{O}_{+}(\rho_{\infty}^{\frac{\gamma-1}{2\gamma}})$ such that 
     	\begin{equation}
     	0<t-t_{-}\leq \mathcal{R}_2 t,\quad 0<t_{+}-t\leq \mathcal{R}_2 t.
     	\end{equation}
     	The proof is complete. 	 
     \end{proof} 
     
     Based on Lemma \ref{lem3}, we give the a priori estimates on $\|w_{\pm}\|_{C^0(\Omega_{ \mathsf{T}})}$ and $\|\partial_rw_{\pm}\|_{C^0(\Omega_{ \mathsf{T}})}$  as follows.
     
     \begin{lemma}[$C^1$ a priori estimates]\label{lem5}
     	For $\rho_\infty$ close to zero, suppose (A1) (A3) hold and $(\rho,v)\in C^1(\Omega_{\mathsf{T}}), b(t)\in C^2(0,\mathsf{T})$ satisfying \eqref{EE1}, \eqref{EE3} and \eqref{EE4} with for $(r,t)\in\Omega_ {\mathsf{T}},$ 
     	\begin{equation}\label{EE47}
     	\left\{
     	\begin{aligned}
     	&\big|(w_{-},w_{+})(r,t)-(w_{-},w_{+})(s(t),t)\big|\leq \mathcal{R}_3, \\&
     	\big|t\partial_rw_{-}(r,t)\big|+\big|t\partial_rw_{+}(r,t)\big|\leq \mathcal{R}_4,
     	\end{aligned}
     	\right.
     	\end{equation}
     	where $\displaystyle\mathcal{R}_3=\mathcal{O}_{+}(\rho^{\frac{\gamma-1}{2\gamma}+\frac{\varpi_1}{3}}_{\infty}), \displaystyle\mathcal{R}_4=\mathcal{O}_{+}(\rho^{\frac{1-\gamma}{2\gamma}+\frac{\varpi_1}{2}}_{\infty}), \varpi_1:=\min\{\frac{\gamma-1}{4\gamma}, \varpi_0\}$ and $\varpi_0$ given in Theorem \ref{thm1}, then there exist $\mathcal{R}_5=\mathcal{O}_{+}(\rho^{\frac{\gamma-1}{2\gamma}+\frac{\varpi_1}{2}}_{\infty}), \mathcal{R}_6=\mathcal{O}_{+}(\rho^{\frac{1-\gamma}{2\gamma}+\varpi_1}_{\infty})$ such that for $(r,t)\in\Omega_ {\mathsf{T}},$ 
     	\begin{equation}\label{EE48}
     	\left\{
     	\begin{aligned}
     	&\big|(w_{-},w_{+})(r,t)-(w_{-},w_{+})(s(t),t)\big|\leq \mathcal{R}_5,\\&
     	\big|t\partial_rw_{-}(r,t)\big|+\big|t\partial_rw_{+}(r,t)\big|\leq  \mathcal{R}_6.
     	\end{aligned}
     	\right.
     	\end{equation}
     \end{lemma}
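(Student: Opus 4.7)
Since the exponent $(\gamma-1)/(2\gamma)+\varpi_1/3$ of $\mathcal{R}_3$ in \eqref{EE47} strictly exceeds $(\gamma-1)/(2\gamma)$, the hypothesis \eqref{Pertrubation1} of Lemma~\ref{lem1} is satisfied, so \eqref{eq:pertubaedrhovEstiamtesS}--\eqref{E12} hold throughout $\Omega_\mathsf{T}$; in particular $w_+-w_-=4c/(\gamma-1)=\mathcal{O}_+(\rho_\infty^{(\gamma-1)/(2\gamma)})$ and $|w_+|+|w_-|=\mathcal{O}_+(1)$. Lemma~\ref{lem3} then delivers $|t-t_\pm|\le\mathcal{R}_2 t$ with $\mathcal{R}_2=\mathcal{O}_+(\rho_\infty^{(\gamma-1)/(2\gamma)})$, and \eqref{EE45} combined with (A1) gives $R_\pm(\tau)\ge\kappa_1\tau/2$ along each characteristic for $\rho_\infty$ small. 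Finally, Lemma~\ref{lem4} controls the shock-trace derivative by $\mathcal{T}_9/t'$.

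\textbf{Step 1 (zeroth-order improvement).} Integrating \eqref{E10} along $\mathcal{P}_{AA_\pm}$ from $(r,t)$ to $(s(t_\pm),t_\pm)$, the bounds $|w_+^2-w_-^2|=\mathcal{O}_+(\rho_\infty^{(\gamma-1)/(2\gamma)})$ and $\int_t^{t_\pm}d\tau/R_\pm(\tau)\le C\mathcal{R}_2$ show that the source integral contributes $\mathcal{O}_+(\rho_\infty^{(\gamma-1)/\gamma})$ to $|w_\pm(r,t)-w_{\pm s}(t_\pm)|$. For the shock-trace difference $|w_{\pm s}(t_\pm)-w_{\pm s}(t)|$, I reuse $|dw_{\pm s}/dt|\le \mathcal{T}_{12}|s''(t)|$ from the proof of Lemma~\ref{lem4}, together with (A3) and $|t_\pm-t|\le\mathcal{R}_2 t$, to get $\mathcal{O}_+(\rho_\infty^{\varpi_0+(\gamma-1)/(2\gamma)})$. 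Both contributions lie below $\mathcal{R}_5$ since $\varpi_1\le(\gamma-1)/(4\gamma)$ and $\varpi_1\le\varpi_0$.

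\textbf{Step 2 (first-order improvement).} I integrate the Riccati-type ODE \eqref{E11} for $w_{+,r}$ along $\mathcal{P}_{AA_+}$ starting from the boundary value $|w_{+,r}(s(t_+),t_+)|\le\mathcal{T}_9/t_+$. After multiplication by $t$, four contributions on $[t,t_+]$ must be controlled: (i) the boundary datum, $\mathcal{O}_+(\rho_\infty^{\varpi_*})$; (ii) the Riccati quadratics $w_{+,r}^2,\,w_{+,r}w_{-,r}$, closed by the a priori bound $|w_{\pm,r}|\le\mathcal{R}_4/\tau$ to yield $\mathcal{O}_+(\mathcal{R}_2\mathcal{R}_4^2)$; (iii) the $1/r^2$ source, $\mathcal{O}_+(\rho_\infty^{(\gamma-1)/\gamma})$; and (iv) the mixed source $r^{-1}(w_+w_{+,r}-w_-w_{-,r})$, bounded by $\mathcal{O}_+(\mathcal{R}_2\mathcal{R}_4)$. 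Direct exponent arithmetic shows that $\mathcal{R}_2\mathcal{R}_4^2$ has exponent $(\gamma-1)/(2\gamma)+2[(1-\gamma)/(2\gamma)+\varpi_1/2]=(1-\gamma)/(2\gamma)+\varpi_1$, so it saturates $\mathcal{R}_6$ exactly; the other three terms are strictly smaller thanks to $\varpi_1\le(\gamma-1)/(4\gamma)$ and $\varpi_1\le\varpi_0$. The estimate for $w_{-,r}$ is symmetric along $\mathcal{P}_{AA_-}$, completing the second line of \eqref{EE48}.

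\textbf{Main obstacle.} The principal difficulty is the simultaneous closure of five distinct $\rho_\infty$-scales (boundary datum $\mathcal{T}_9$, the Riccati nonlinearity $\mathcal{R}_2\mathcal{R}_4^2$, the two geometric source terms, and the shock-trace variation) inside the strictly improved targets $\mathcal{R}_5,\mathcal{R}_6$. This is precisely why $\varpi_1$ is set to $\min\{(\gamma-1)/(4\gamma),\varpi_0\}$ and why the a priori exponents in \eqref{EE47} trail the improved exponents in \eqref{EE48} by only $\varpi_1/6$ and $\varpi_1/2$: that is exactly the slack required so the Riccati term does not blow up under integration, while leaving room for the continuity/bootstrap step to be used in the proof of Theorem~\ref{thm1}.
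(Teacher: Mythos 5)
Your proposal is correct, and its Step 2 --- the derivative estimate, which is the substantive half of the lemma --- follows essentially the paper's own argument: integrate the Riccati system \eqref{E11} along $\mathcal{P}_{AA_+}$ from the shock datum $\mathcal{T}_9/t_+$ of Lemma \ref{lem4}, use $R_+(\tau)/\tau=\mathcal{O}_+(1)$ and $t_+-t\le\mathcal{R}_2 t$ to reduce every integral to a factor $\mathcal{R}_2$, and observe that the quadratic term $\mathcal{R}_2\mathcal{R}_4^2$ is exactly what fixes the exponent $\frac{1-\gamma}{2\gamma}+\varpi_1$ of $\mathcal{R}_6$. Your Step 1, however, takes a genuinely different route from the paper. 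You propagate $w_\pm$ along characteristics back to the shock and then control the variation of the shock trace via $|dw_{\pm s}/dt|\le\mathcal{T}_{12}|s''|$ together with (A3); this works (both contributions, $\mathcal{O}_+(\rho_\infty^{(\gamma-1)/\gamma})$ and $\mathcal{O}_+(\rho_\infty^{\varpi_0+\frac{\gamma-1}{2\gamma}})$, sit below the target exponent $\frac{\gamma-1}{2\gamma}+\frac{\varpi_1}{2}$ because $\varpi_1\le\min\{\frac{\gamma-1}{4\gamma},\varpi_0\}$), but it needs two extra ingredients (the source bound along the characteristic and the shock-trace estimate). The paper instead integrates purely in space at fixed time: $|w_\pm(r,t)-w_\pm(s(t),t)|\le\int_r^{s(t)}|\partial_\eta w_\pm|\,\dif\eta\le \mathcal{R}_4 t^{-1}\,(s(t)-b(t))\le\mathcal{R}_4\mathcal{T}_{15}$, with $\mathcal{T}_{15}=\mathcal{O}_+(\rho_\infty^{\frac{\gamma-1}{\gamma}})$ the width of the strip from \eqref{EE45}, which yields $\mathcal{R}_5=\mathcal{R}_4\mathcal{T}_{15}$ in one line and makes transparent why the exponent of $\mathcal{R}_5$ is exactly that of $\mathcal{R}_4$ plus $\frac{\gamma-1}{\gamma}$. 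Your version buys nothing extra here and is heavier; the paper's version is preferable for Step 1, but your exponent bookkeeping is sound throughout and the closure logic (why $\varpi_1$ is capped by both $\frac{\gamma-1}{4\gamma}$ and $\varpi_0$) is correctly identified.
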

     \begin{proof}
         We first assert that \eqref{EE47} satisfies the conditions of Lemma \ref{lem3}. Consequently, not only does Lemma \ref{lem3} hold, but the conclusions drawn in its proof also apply.
         
         Due to \eqref{EE45} and \eqref{EE47},  a direct computation shows that for $(r,t)\in\Omega_ {\mathsf{T}},$ 
     	\begin{equation}
     	\begin{aligned}
     	&\big|w_{\pm}(r,t)-w_{\pm}(s(t),t)\big|\leq\int_{r}^{s(t)}\big|\partial_\eta w_{\pm}(\eta,t)\big|~\dif \eta\\&\leq\big|s(t)-b(t)\big|\sup_{\eta\in(b(t),s(t))}\big|w_{\pm,\eta}(\eta,t)\big|\\&\leq \mathcal{R}_4\mathcal{T}_{15}=\mathcal{O}_{+}(\rho^{\frac{\gamma-1}{2\gamma}+\frac{\varpi_1}{2}}_{\infty}),
     	\end{aligned}
     	\end{equation}
        where $\displaystyle\mathcal{T}_{15}=\mathcal{O}_+(\rho_\infty^{\frac{\gamma-1}{\gamma}})$ given in  \eqref{EE45}. Thus, we take $\mathcal{R}_5=\mathcal{R}_4\mathcal{T}_{15}$ to prove the first inequality in \eqref{EE48}.
        
        It remains to prove the second inequality in \eqref{EE48}. To this end, for $A(r,t)\in\Omega_ {\mathsf{T}},$ integrating the first equation in \eqref{E11} along $\mathcal{P}_{AA_+}$, and substituting the facts that 
        \begin{equation}
            \big|w_\pm\big|\leq \mathcal{T}_{16}, ~\big|\partial_rw_{\pm}(r,t)\big|\leq \mathcal{R}_4t^{-1},
        \end{equation}
        for some $\displaystyle \mathcal{T}_{16}=\mathcal{O}_+(1),$ yields that for some $\displaystyle \mathcal{T}_{17}=\mathcal{O}_+(1),$
        \begin{equation}\label{E25}
           \big|\partial_rw_{+}(A)-\partial_rw_{+}(A_{+})\big|\leq \mathcal{T}_{17}\int_t^{t_+} \frac{1}{R^2_+(\tau)}+\frac{\mathcal{R}_4}{\tau R_+(\tau)}+\frac{\mathcal{R}_4^2}{\tau^2} \dif \tau, 
        \end{equation}
        where $\displaystyle\mathcal{P}_{AA_+}=\{(R_+(\tau),\tau):\tau>0\}$ given in \eqref{defofCharacteristics}.  Moreover, given $\displaystyle R_+(\tau)\in (b(\tau),s(\tau)),$ it follows from \eqref{EE42} and (A1) that 
        \begin{equation}\label{eq:narrowratio}
            R_+(\tau)/\tau=\mathcal{O}_+(1). 
        \end{equation}
        Thus, substituting \eqref{eq:narrowratio} into \eqref{E25} yields that  
        \begin{equation}
            \begin{aligned}
                &\big|\partial_rw_{+}(A)-\partial_rw_{+}(A_{+})\big|\leq \mathcal{T}_{17}\int_t^{t_+} \frac{1}{\tau^2}+\frac{\mathcal{R}_4}{\tau^2}+\frac{\mathcal{R}_4^2}{\tau^2} \dif \tau\\&\leq\mathcal{T}_{17}\frac{1}{t^2}(1+\mathcal{R}_4+\mathcal{R}_4^2)(t_+-t).
            \end{aligned}
        \end{equation}
        Multiplying the former formula by $t$ and substituting \eqref{EE46} lead to 
        \begin{equation}
            \big|t\partial_rw_+(r,t)\big|\leq \frac{t}{t_+}\big|t_+\partial_rw_+(s(t_+),t_+)\big|+\mathcal{T}_{17}(1+\mathcal{R}_4+\mathcal{R}_4^2)\mathcal{R}_2,
        \end{equation}
        which together with Lemma \ref{lem4} implies that 
        \begin{equation}\label{eq:middleofderivativeestimates1}
            \big|t\partial_rw_+(r,t)\big|\leq \frac{t}{t_+}\mathcal{T}_9+\mathcal{T}_{17}(1+\mathcal{R}_4+\mathcal{R}_4^2)\mathcal{R}_2.
        \end{equation}
        Recall that 
        \begin{equation}
            \mathcal{T}_9=\mathcal{O}_+(\rho_\infty^{\frac{1-\gamma}{2\gamma}+\varpi_0}), ~\mathcal{R}_4=\mathcal{O}_{+}(\rho^{\frac{1-\gamma}{2\gamma}+\frac{\varpi_1}{2}}_{\infty}), ~\mathcal{R}_2=\mathcal{O}_{+}(\rho_{\infty}^{\frac{\gamma-1}{2\gamma}})
        \end{equation}
        and 
        \begin{equation}
            \mathcal{T}_{17}=\mathcal{O}_+(1), ~\varpi_1=\min\{\frac{\gamma-1}{4\gamma}, \varpi_0\}.
        \end{equation}
        which,together with \eqref{EE46}, implies that
        \begin{equation}\label{eq:middleofderivativeestimates2}
            \frac{t}{t_+}\mathcal{T}_9+\mathcal{T}_{17}(1+\mathcal{R}_4+\mathcal{R}_4^2)\mathcal{R}_2=\mathcal{O}_{+}(\rho^{\frac{1-\gamma}{2\gamma}+\varpi_1}_{\infty}).
        \end{equation}

        Combining \eqref{eq:middleofderivativeestimates1} and \eqref{eq:middleofderivativeestimates2}, by taking $$\displaystyle \mathcal{R}_6=\frac{t}{t_+}\mathcal{T}_9+\mathcal{T}_{17}(1+\mathcal{R}_4+\mathcal{R}_4^2)\mathcal{R}_2,$$ we obtain the first formula in \eqref{EE48}. The second formula in \eqref{EE48} follows from the similar argument. The proof is complete.
     \end{proof}

     Now, we are ready to prove the main result Theorem \ref{thm1}.
     
     \begin{proof}[Proof of Theorem \ref{thm1}]	

       We divide the proof into two steps: first, we demonstrate the local solvability of problem \eqref{EE1}-\eqref{EE4}; second, we apply the a priori estimates in Lemma \ref{lem5} to extend the local solution to a global one. 
       
       \textbf{Step 1:} (Local solvability). Due to (A2) that $\displaystyle s''(t)=0, t\in(0,\kappa_3)$, referring to \cite[Lemma 3.5]{Wang2005DCDS} and Remark \ref{rem:localexsitence}, 
       we conclude that problem \eqref{EE1}-\eqref{EE4} admits unique straight piston trajectory $\displaystyle b(t)\in C^2(0,t_0)$ and unique self-similar flow field $\displaystyle (\rho,v)\in C^1(\Omega_{t_0}\cap\{t<t_0\}),$ for some $\displaystyle t_0\in(0,\kappa_3).$ Moreover, recalling the definition of $\displaystyle\mathcal{O}_+(\rho_\infty^\alpha)$ in Definition \ref{def:defofO}, Lemma \ref{thm:C01E} implies that there exists a constant $\epsilon_1>0$ such that when $\displaystyle\rho_\infty\in(0,\epsilon_1),$ the obtained self-similar flow field satisfies 
       \begin{equation}
     	\left\{
     	\begin{aligned}
     	&\big|(w_{-},w_{+})(r,t)-(w_{-},w_{+})(s(t),t)\big|\leq \tilde{\mathcal{R}}_3, \\&
     	\big|t\partial_rw_{-}(r,t)\big|+\big|t\partial_rw_{+}(r,t)\big|\leq \tilde{\mathcal{R}}_4, (r,t)\in \Omega_{t_0}\cap\{t<t_0\},
     	\end{aligned}
     	\right.
     	\end{equation}
       for some $\displaystyle\tilde{\mathcal{R}}_3=\mathcal{O}_{+}(\rho^{\frac{\gamma-1}{2\gamma}+\frac{\varpi_1}{3}}_{\infty}), \tilde{\mathcal{R}}_4=\mathcal{O}_{+}(\rho^{\frac{1-\gamma}{2\gamma}+\frac{\varpi_1}{2}}_{\infty})$ and $\varpi_1$ defined in Lemma \ref{lem5}. 
       
       Furthermore, due to Lemma \ref{lem1} and Lemma \ref{lem4}, there exists a constant $\epsilon_2\in(0,\epsilon_1)$ such that when $\rho_\infty\in(0,\epsilon_2),$ $\displaystyle (\rho,v)\big|_{\mathsf{S}\cap\{t>t_0/2\}}$ is well-defined and satisfies \eqref{eq:rhovEstiamtesS}, \eqref{EE52} and the following 
       \begin{equation}\label{eq:proofofmainthmE1}
           \big|t\partial_rw_{-}(s(t),t)\big|+\big|t\partial_rw_{+}(s(t),t)\big|\leq \frac{1}{10}\tilde{\mathcal{R}}_4,~ t>t_0/2.
       \end{equation}
       Here to derive \eqref{eq:proofofmainthmE1} from \eqref{EE53} in Lemma \ref{lem4}, the fact $\displaystyle \frac{1-\gamma}{2\gamma}+\varpi_0>\frac{1-\gamma}{2\gamma}+\frac{\varpi_1}{2}$ following from the definition of $\varpi_1,$ is applied.

       Therefore, referring to \cite[Chapter 1]{Li1985boundary}, results on the local existence and uniqueness of classical solution for hyperbolic systems with initial data, there exists $\delta>0$ such that the problem \eqref{EE1} with Cauchy data prescribed on $\displaystyle\{(r,t):r=s(t), t>{t_0}/{2}\},$ admits unique classical solution 
       $\displaystyle (\rho,v)\in C^1(\Omega_{t_0}\cap S_{\delta})$ satisfying 
       \begin{equation}
     	\left\{
     	\begin{aligned}
     	&\big|(w_{-},w_{+})(r,t)-(w_{-},w_{+})(s(t),t)\big|\leq \tilde{\mathcal{R}}_3, \\&
     	\big|t\partial_rw_{-}(r,t)\big|+\big|t\partial_rw_{+}(r,t)\big|\leq \tilde{\mathcal{R}}_4, (r,t)\in \Omega_{t_0}\cap \mathsf{S}_{\delta},
     	\end{aligned}
     	\right.
     	\end{equation}
       Here $$\mathsf{S}_{\delta}=\{(r,t):\dist((r,t),\mathsf{S})<\delta,t>2/3t_0\},$$ 
       and $\delta>0$ depends on $\tilde{\mathcal{R}}_3, \tilde{\mathcal{R}}_4,$ and $\displaystyle\inf\{\big|\lambda_\pm(s(t),t)-s'(t)\big|: t>0\}.$ 

       Collecting the obtained $$b(t),t\in(0,t_0), (\rho,v)\in C^1(\Omega_{t_0}\cap\{t<t_0\}), (\rho,v)\in C^1(\Omega_{t_0}\cap S_{\delta})$$ together, we conclude that when $\rho_\infty\in(0,\epsilon_2),$ there exists $t_1\in(0,t_0)$ such that problem \eqref{EE1}-\eqref{EE4} admits $\displaystyle b(t)\in C^2(0,t_1)$ and $\displaystyle (\rho,v)\in C^1(\Omega_{t_1}),$ satisfying that for $(r,t)\in \Omega_{t_1},$
       \begin{equation}\label{eq:proofofmainE2}
     	\left\{
     	\begin{aligned}
     	&\big|(w_{-},w_{+})(r,t)-(w_{-},w_{+})(s(t),t)\big|\leq \tilde{\mathcal{R}}_3, \\&
     	\big|t\partial_rw_{-}(r,t)\big|+\big|t\partial_rw_{+}(r,t)\big|\leq \tilde{\mathcal{R}}_4.
     	\end{aligned}
     	\right.
     	\end{equation}

    \textbf{Step 2:} (Global solvability). Due to Lemma \ref{lem5}, \eqref{eq:proofofmainE2} implies that there exists a constant $\epsilon_3\in(0,\epsilon_2)$ such that when $\rho_\infty\in(0,\epsilon_3),$ the obtained local solution satisfies for $(r,t)\in \Omega_{t_1},$
     \begin{equation}\label{eq:proofofmainE3}
     	\left\{
     	\begin{aligned}
     	&\big|(w_{-},w_{+})(r,t)-(w_{-},w_{+})(s(t),t)\big|\leq \frac{1}{10}\tilde{\mathcal{R}}_3, \\&
     	\big|t\partial_rw_{-}(r,t)\big|+\big|t\partial_rw_{+}(r,t)\big|\leq \frac{1}{10}\tilde{\mathcal{R}}_4.
     	\end{aligned}
     	\right.
     	\end{equation}

    To extend the obtained local solution in step 1, we need to solve problem \eqref{EE1} with Cauchy data prescribed on $\displaystyle\{(r,t):r=s(t;t_1), t>t_1\}$ and free boundary condition $b'(t)=v(b(t),t),t>t_1$ prescribed on unknown piston trajectory $\displaystyle r=b(t), t>t_1$. 
    
    Since the given Cauchy data satisfies  \eqref{eq:proofofmainE3}, and the piston speed is bigger than the eigenvalue $\lambda_-,$ i.e., $v(b(t),t)>\lambda_-(b(t),t)=v(b(t),t)-c(b(t),t),$ the local existence results of hyperbolic system with Cauchy data ensures to extend $\displaystyle b(t)\in C^2(0,t_1), (\rho,v)\in C^1(\Omega_{t_1})$ to $\displaystyle b(t)\in C^2(0,t_1+\delta_1), (\rho,v)\in C^1(\Omega_{t_1+\delta_1})$ satisfying \eqref{eq:proofofmainE2} for some constant $\delta_1>0$ depending on $\tilde{\mathcal{R}}_3, \tilde{\mathcal{R}}_4.$ Moreover, Lemma \ref{lem5} implies \eqref{eq:proofofmainE3} again. 
    
    Therefore, the continuity argument, combined with the a priori estimates in Lemma \ref{lem5}, ensures the global classical solution of problem  \eqref{EE1}-\eqref{EE4}.

    Finally, we take $\epsilon=\epsilon_3$ to complete the proof. 
     \end{proof}
 
 \medskip
 
 \section*{Acknowledgments}
 Qianfeng Li was partially supported by Sino-German (CSC-DAAD) Postdoc Scholarship Program, 2023 (No. 57678375). Yongqian Zhang was partially supported by NSFC Project 11421061 and by NSFC Project 12271507.

 \section*{Declarations}

\noindent\textbf{Conflict of interest} 
On behalf of all authors, the corresponding author states that there is no conflict of interest.

\noindent\textbf{Data Availability} The paper does not use any data set.

\bibliographystyle{plain}
\bibliography{CKWX20240603}

\begin{thebibliography}{10}

\bibitem{allahverdi2016numerical}
Navid Allahverdi, Alejandro Pozo, and Enrique Zuazua.
\newblock Numerical aspects of large-time optimal control of burgers equation.
\newblock {\em ESAIM: Mathematical Modelling and Numerical Analysis},
  50(5):1371--1401, 2016.

\bibitem{castro2008alternating}
Carlos Castro, Francisco Palacios, and Enrique Zuazua.
\newblock An alternating descent method for the optimal control of the inviscid
  burgers equation in the presence of shocks.
\newblock {\em Mathematical Models and Methods in Applied Sciences},
  18(03):369--416, 2008.

\bibitem{castro2010optimal}
Carlos Castro, Francisco Palacios, and Enrique Zuazua.
\newblock Optimal control and vanishing viscosity for the burgers equation.
\newblock {\em Integral Methods in Science and Engineering, Volume 2:
  Computational Aspects}, pages 65--90, 2010.

\bibitem{Chen2003JDE}
Shuxing Chen.
\newblock A singular multi-dimensional piston problem in compressible flow.
\newblock {\em Journal of Differential Equations}, 189(1):292--317, 2003.

\bibitem{Wang2005DCDS}
Shuxing Chen, Gui-Qiang Chen, Zejun Wang, and Dehua Wang.
\newblock A multidimensional piston problem for the euler equations for
  compressible flow.
\newblock {\em Discrete and Continuous Dynamical Systems}, 13(2):361--383,
  2005.

\bibitem{Wangzejun2004}
Shuxing Chen, Zejun Wang, and Yongqian Zhang.
\newblock Global existence of shock front solutions to the axially symmetric
  piston problem for compressible fluids.
\newblock {\em J. Hyperbolic Differ. Equ.}, 1(1):51--84, 2004.

\bibitem{Wangzejun2008Global}
Shuxing Chen, Zejun Wang, and Yongqian Zhang.
\newblock Global existence of shock front solution to axially symmetric piston
  problem in compressible flow.
\newblock {\em Z. Angew. Math. Phys.}, 59(3):434--456, 2008.

\bibitem{colombo2020initial}
Rinaldo~M Colombo and Vincent Perrollaz.
\newblock Initial data identification in conservation laws and hamilton--jacobi
  equations.
\newblock {\em Journal de Math{\'e}matiques Pures et Appliqu{\'e}es},
  138:1--27, 2020.

\bibitem{Courant1948}
R.~Courant and K.~O. Friedrichs.
\newblock {\em Supersonic {F}low and {S}hock {W}aves}.
\newblock Interscience Publishers, Inc., New York, N. Y., 1948.

\bibitem{Ding2013ZAMP}
Min Ding and Yachun Li.
\newblock Local existence and non-relativistic limits of shock solutions to a
  multidimensional piston problem for the relativistic euler equations.
\newblock {\em Zeitschrift f{\"u}r angewandte Mathematik und Physik},
  64:101--121, 2013.

\bibitem{Dingmin2013An}
Min Ding and Yachun Li.
\newblock An overview of piston problems in fluid dynamics.
\newblock In {\em Hyperbolic conservation laws and related analysis with
  applications}, volume~49 of {\em Springer Proc. Math. Stat.}, pages 161--191.
  Springer, Heidelberg, 2014.

\bibitem{esteve2020inverse}
Carlos Esteve and Enrique Zuazua.
\newblock The inverse problem for hamilton--jacobi equations and semiconcave
  envelopes.
\newblock {\em SIAM Journal on Mathematical Analysis}, 52(6):5627--5657, 2020.

\bibitem{Lai2023EJAM}
Lai Geng.
\newblock Global solution to a three-dimensional spherical piston problem for
  the relativistic euler equations.
\newblock {\em European Journal of Applied Mathematics}, pages 1--26, 2022.

\bibitem{goldsworthy1952supersonic}
F.~A. Goldsworthy.
\newblock Supersonic flow over thin symmetrical wings with given surface
  pressure distribution.
\newblock {\em Aeronautical Quarterly}, 3(4):263--279, 1952.

\bibitem{gosse2017filtered}
Laurent Gosse and Enrique Zuazua.
\newblock Filtered gradient algorithms for inverse design problems of
  one-dimensional burgers equation.
\newblock {\em Innovative algorithms and analysis}, pages 197--227, 2017.

\bibitem{hu2024inverse}
Dian Hu, Qianfeng Li, and Yongqian Zhang.
\newblock An inverse problem for hypersonic flow past a curved cone.
\newblock {\em SIAM Journal on Mathematical Analysis}, 56(5):6915--6932, 2024.

\bibitem{KuangJie2021}
Jie Kuang, Wei Xiang, and Yongqian Zhang.
\newblock Convergence rate of hypersonic similarity for steady potential flows
  over two-dimensional lipschitz wedge.
\newblock {\em Calculus of Variations and Partial Differential Equations},
  62(3):106, 2023.

\bibitem{Lai2020EJAM}
Geng Lai.
\newblock Self-similar solutions of the radially symmetric relativistic euler
  equations.
\newblock {\em European Journal of Applied Mathematics}, 31(6):919--949, 2020.

\bibitem{li2024hypersonic}
Qianfeng Li, Aifang Qu, Xueying Su, and Hairong Yuan.
\newblock Hypersonic limit for steady compressible euler flows passing straight
  cones.
\newblock {\em Communications in Mathematical Analysis and Applications},
  3(2):136--167, 2024.

\bibitem{li2022inverse}
Qianfeng Li and Yongqian Zhang.
\newblock An inverse problem for supersonic flow past a curved wedge.
\newblock {\em Nonlinear Analysis: Real World Applications}, 66:103541, 2022.

\bibitem{li2006global}
Tatsien Li and Libin Wang.
\newblock Global exact shock reconstruction for quasilinear hyperbolic systems
  of conservation laws.
\newblock {\em Discrete and Continuous Dynamical Systems}, 15(2):597--609,
  2006.

\bibitem{Wanglibin2007}
Tatsien Li and Libin Wang.
\newblock Inverse piston problem for the system of one-dimensional isentropic
  flow.
\newblock {\em Chinese Ann. Math. Ser. B}, 28(3):265--282, 2007.

\bibitem{Li1985boundary}
Tatsien Li and Wen~Ci Yu.
\newblock {\em Boundary value problems for quasilinear hyperbolic systems}.
\newblock Duke University Mathematics Series, V. Duke University, Mathematics
  Department, Durham, NC, 1985.

\bibitem{liard2021initial}
Thibault Liard and Enrique Zuazua.
\newblock Initial data identification for the one-dimensional burgers equation.
\newblock {\em IEEE Transactions on Automatic Control}, 67(6):3098--3104, 2021.

\bibitem{liard2023analysis}
Thibault Liard and Enrique Zuazua.
\newblock Analysis and numerical solvability of backward-forward conservation
  laws.
\newblock {\em SIAM Journal on Mathematical Analysis}, 55(3):1949--1968, 2023.

\bibitem{pu2023inverse}
Yun Pu and Yongqian Zhang.
\newblock An inverse problem for determining the shape of the wedge in steady
  supersonic potential flow.
\newblock {\em Journal of Mathematical Fluid Mechanics}, 25(2):25, 2023.

\bibitem{Taylor1946}
Geoffrey~Ingram Taylor.
\newblock The air wave surrounding an expanding sphere.
\newblock {\em Proc. Roy. Soc. London Ser. A}, 186:273--292, 1946.

\bibitem{Tsien1946}
Hsue-shen Tsien.
\newblock Similarity laws of hypersonic flows.
\newblock {\em J. Math. Phys. Mass. Inst. Tech.}, 25:247--251, 1946.

\bibitem{wang2011direct}
Libin Wang.
\newblock Direct problem and inverse problem for the supersonic plane flow past
  a curved wedge.
\newblock {\em Mathematical methods in the applied sciences},
  34(18):2291--2302, 2011.

\bibitem{WangLibin2014}
Libin Wang.
\newblock An inverse piston problem for the system of one-dimensional adiabatic
  flow.
\newblock {\em Inverse Problems}, 30(8):085009, 17, 2014.

\bibitem{Wanglibin2019}
Libin Wang and Youren Wang.
\newblock An inverse {P}iston problem with small {BV} initial data.
\newblock {\em Acta Appl. Math.}, 160:35--52, 2019.

\bibitem{Wang2004ACTA}
Zejun Wang.
\newblock Local existence of the shock front solution to the axi--symmetrical
  piston problem in compressible flow.
\newblock {\em Acta Mathematica Sinica}, 20(4):589--604, 2004.

\end{thebibliography}
	
\end{document}